\theoremstyle{definition}
\newtheorem{defi}{Definition}
\theoremstyle{plain}
\newtheorem{thm}[defi]{Theorem}
\newtheorem{cor}[defi]{Corollary}
\newtheorem{prop}[defi]{Proposition}
\newtheorem{claim}[defi]{Claim}
\newtheorem{ques}[defi]{Question}
\newtheorem{obs}[defi]{Observation}
\newcounter{enuroman}
\renewcommand{\theenuroman}{\roman{enuroman}}
\newenvironment{romanenumerate}{\begin{list}{\rm (\theenuroman)}{\usecounter{enuroman}
    \setlength{\labelwidth}{1cm}}}
   {\end{list}}
\newenvironment{romanenumerate'}{\begin{list}{\rm (\theenuroman')}{\usecounter{enuroman}
    \setlength{\labelwidth}{1cm}}}
   {\end{list}}
\newcommand{\forces}{\Vdash}
\newcommand{\re}{{\upharpoonright}}
\newcommand{\A}{{\cal A}}
\newcommand{\F}{{\cal F}}
\newcommand{\I}{{\cal I}}
\newcommand{\M}{{\cal M}}
\newcommand{\NWD}{{\cal{NWD}}}
\newcommand{\CC}{{\mathbb C}}
\newcommand{\bb}{{\mathfrak b}}
\newcommand{\cc}{{\mathfrak c}}
\newcommand{\dd}{{\mathfrak d}}
\newcommand{\add}{{\mathsf{add}}}
\newcommand{\cov}{{\mathsf{cov}}}
\newcommand{\non}{{\mathsf{non}}}
\newcommand{\cof}{{\mathsf{cof}}}
\newcommand{\cf}{{\mathrm{cf}}}
\newcommand{\lh}{{\mathrm{lh}}}
\newcommand{\sub}{\subseteq}
\newcommand{\sem}{\setminus}
\newcommand{\twoom}{2^\omega}
\newcommand{\omom}{\omega^\omega}
\newcommand{\ha}{\,{}\hat{}\,}
\newcommand{\la}{\langle}
\newcommand{\ra}{\rangle}
\newcommand{\Tau}{\mathrm{T}}
\title{The higher Cicho\'{n} diagram in the degenerate case}
\author{J\"org Brendle\thanks{Partially supported by Grants-in-Aid for Scientific Research
   (C) 15K04977 and 18K03398, Japan Society for the Promotion of Science.\newline
    \indent {\it 2020 Mathematics Subject Classification.} Primary 03E05; Secondary 03E17, 03E35 \newline
    \indent {\it Keywords.} higher cardinal invariants, higher meager ideal, unbounding number, dominating number, forcing}  \\
   Graduate School of System Informatics \\
   Kobe University \\
   Rokko-dai 1-1, Nada-ku \\
   Kobe 657-8501, Japan \\
   email: {\sf brendle@kobe-u.ac.jp}}
\begin{document}
\maketitle

\begin{abstract}
\noindent 
For a regular uncountable cardinal $\kappa$, we discuss the order relationship between the 
unbounding and dominating numbers $\bb_\kappa$ and $\dd_\kappa$ on $\kappa$ and
cardinal invariants of the higher meager ideal $\M_\kappa$. In particular, we obtain an almost complete
characterization of $\add (\M_\kappa)$ and $\cof (\M_\kappa)$ in terms of $\cov (\M_\kappa)$ and
$\non (\M_\kappa)$ and unbounding and dominating numbers, and we provide models showing
that there are no restrictions on the value of $\non (\M_\kappa)$ in the degenerate case $2^{< \kappa} > \kappa$
except $2^{<\kappa} \leq \non (\M_\kappa) \leq 2^\kappa$. The corresponding question for
$\cof (\M_\kappa)$ remains open. Our results answer questions of joint work of the author with
Brooke-Taylor, Friedman, and Montoya~\cite[Questions 29 and 32]{BBFM18}.
\end{abstract}


\section{Introduction}

Cardinal invariants of the continuum, describing the combinatorial properties of the real numbers ($\twoom$ or $\omom$) and taking
values between $\omega_1$ and the continuum $\cc$, have been studied intensively for several decades, and a rich theory with ZFC-results
and independence proofs about the order relationship between various cardinal invariants has evolved (see~\cite{BJ95} and~\cite{Bl10}).
More recently, {\em higher cardinal invariants}, that is, cardinal invariants of the higher Cantor space $2^\kappa$ or the higher Baire
space $\kappa^\kappa$, where $\kappa$ is an uncountable regular cardinal, have started to be investigated and our work is
a contribution to this ongoing research. 

Our focus lies on cardinal invariants in the {\em higher Cicho\'n diagram} (see~\cite{BBFM18} and~\cite{BGSta}). The original
Cicho\'n diagram~\cite{BJ95} describes the relationship between cardinal invariants related to measure and category as well 
as the unbounding and dominating numbers $\bb$ and $\dd$. The latter can be easily redefined in the context of regular
uncountable $\kappa$, by
\begin{itemize}
\item $\bb_\kappa = \min \{ |F| : F \sub \kappa^\kappa$ and $\forall g \in \kappa^\kappa \; \exists f \in F \; ( f \not\leq^* g) \}$, \\ the
   {\em $\kappa$-unbounding number}, and
\item $\dd_\kappa = \min \{ |F| : F \sub \kappa^\kappa$ and $\forall g \in \kappa^\kappa \; \exists f \in F \; ( g \leq^* f) \}$, \\ the
   {\em $\kappa$-dominating number},
\end{itemize}
where $f \leq^* g$ if there is $\alpha < \kappa$ such that $f (\beta ) \leq g(\beta)$ for all $\beta \geq \alpha$. Clearly $\bb_\kappa \leq \dd_\kappa$.
Also, there is a natural analog of the meager ideal on the higher Cantor space $2^\kappa$: give $2$ the discrete topology and equip
$2^\kappa$ with the {\em $\kappa$-box topology}. That is, basic clopen sets are of the form
\[ [\sigma] = \{ f \in 2^\kappa : \sigma \sub f \} \]
where $\sigma \in 2^{< \kappa}$, and open sets are arbitrary unions of such basic clopen sets. Thus a set $A \sub 2^{< \kappa}$ 
is nowhere dense in this topology if for all $\sigma \in 2^{< \kappa}$ there is $\tau \supseteq \sigma$ such that $[\tau] \cap A 
= \emptyset$. This implies that the nowhere dense ideal on $2^\kappa$, denoted by $\NWD_\kappa$, is $< \kappa$-closed
(i.e. closed under unions of size $< \kappa$). Say that $A \sub 2^\kappa$ is {\em $\kappa$-meager} if it is a union of at most
$\kappa$ many nowhere dense sets, and let $\M_\kappa$ denote the ($\kappa$-closed) ideal of $\kappa$-meager sets. 
It is much less clear how the null ideal should be generalized to regular uncountable $\kappa$; a very interesting candidate has been 
proposed (for weakly compact $\kappa$) by Baumhauer, Goldstern, and Shelah in~\cite{BGSta}. We shall not pursue this here.

Let $\I$ be a non-trivial ideal on a set $X$, that is, all the singletons $\{ x \}$, $x \in X$, belong to $\I$ and $X \notin \I$. Define
\begin{itemize}
\item $\add (\I) = \min \{ | \F| : \F \sub \I$ and $\bigcup \F \notin \I \}$, the {\em additivity of $\I$},
\item $\cov (\I) = \min \{ | \F| : \F \sub \I$ and $\bigcup \F = X\}$, the {\em covering number of $\I$},
\item $\non (\I) = \min \{ | Y | : Y \sub X $ and $Y \notin \I \}$, the {\em uniformity of $\I$}, and
\item $\cof (\I) = \min \{ | \F| : \F \sub \I$ and $\forall A \in \I \; \exists B \in \F \; (A \sub B)  \}$, \\ the {\em cofinality of $\I$}.
\end{itemize}
Easily $\add (\I) \leq \cov (\I) , \non (\I) \leq \cof (\I)$. In our earlier work~\cite{BBFM18} we observed that
$\bb_\kappa \leq \non (\M_\kappa)$ and $\cov (\M_\kappa) \leq \dd_\kappa$~\cite[Observation 17]{BBFM18}, and proved:
\begin{romanenumerate}
\item $\add (\M_\kappa) \leq \bb_\kappa$ and $\dd_\kappa \leq \cof (\M_\kappa)$ for strongly inaccessible $\kappa$~\cite[Corollary 28]{BBFM18},
\item $\add (\M_\kappa) \geq \min \{ \bb_\kappa , \cov (\M_\kappa) \}$~\cite[Corollary 31]{BBFM18}, and
\item $\cof (\M_\kappa) \leq \max \{ \dd_\kappa , \non (\M_\kappa) \}$ in case $2^{< \kappa} = \kappa$~\cite[Corollary 31]{BBFM18}.
\end{romanenumerate}
In particular, $\add (\M_\kappa) = \min \{ \bb_\kappa, \cov (\M_\kappa) \}$ and $\cof (\M_\kappa) = \max \{ \dd_\kappa, \non (\M_\kappa) \}$
for strongly inaccessible $\kappa$, and the cardinals can be displayed in the following diagram.
\begin{figure}[ht]
\begin{center}
\setlength{\unitlength}{0.2000mm}
\begin{picture}(800.0000,180.0000)(80,10)
\thinlines
\put(355,180){\vector(1,0){50}}
\put(355,100){\vector(1,0){50}}
\put(355,20){\vector(1,0){50}}
\put(440,30){\vector(0,1){60}}
\put(440,110){\vector(0,1){60}}
\put(320,110){\vector(0,1){60}}
\put(320,30){\vector(0,1){60}}
\put(410,170){\makebox(60,20){$\cof(\mathcal{M}_\kappa)$}}
\put(410,90){\makebox(60,20){$\mathfrak{d}_\kappa$}}
\put(410,10){\makebox(60,20){$\cov(\mathcal{M}_\kappa)$}}
\put(290,10){\makebox(60,20){$\add(\mathcal{M}_\kappa)$}}
\put(290,90){\makebox(60,20){$\mathfrak{b}_\kappa$}}
\put(290,170){\makebox(60,20){$\non(\mathcal{M}_\kappa)$}}
\end{picture}
\end{center}
\caption{middle part of Cicho\'n's diagram for regular uncountable $\kappa$}
\label{ODiag}
\end{figure}
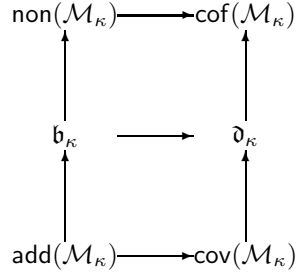
We asked whether the assumptions were necessary in (i) and (iii) above~\cite[Questions 29 and 32]{BBFM18}, and these 
questions are the starting point of the present work.

Note that in the {\em degenerate case} $2^{< \kappa} > \kappa$, some of the cardinal invariants become trivial. Namely Landver~\cite[1.3]{La92}
(see also~\cite[Observation 23(ii)]{BBFM18}) observed that $\add (\M_\kappa) = \cov (\M_\kappa) = \kappa^+$ and this accounts for (ii)
above in case $2^{< \kappa} > \kappa$. Blass, Hyttinen, and Zhang~\cite[4.15]{BHZta} (see also~\cite[Observation 23(iii)]{BBFM18}) noticed
that $2^{< \kappa} \leq \non (\M_\kappa)$. Finally, we proved~\cite[Proposition 2 (c)]{Br17} that $2^{< \kappa} < \cof (\M_\kappa)$ and this
implies that $\cof (\M_\kappa) > \max \{ \dd_\kappa , \non (\M_\kappa) \}$ in any model of $2^{< \kappa} = 2^\kappa$; in particular, (iii) may
fail in the degenerate case. We will obtain better lower bounds for $\cof (\M_\kappa)$ (see in particular Corollary~\ref{firstcor}). The results about
$\add (\M_\kappa)$ and $\cov (\M_\kappa)$ in the degenerate case suggest the problem of whether one can say more about $\non (\M_\kappa)$ and 
$\cof (\M_\kappa)$ in this situation. We shall see this is not the case for $\non (\M_\kappa)$ (see in particular Theorems~\ref{1stcon} 
and~\ref{2ndcon}), while for $\cof (\M_\kappa)$ some interesting questions remain open.

This paper is organized as follows. In Section~\ref{ZFC}, we obtain several ZFC-results about higher cardinal invariants which strengthen
those of~\cite{BBFM18}; in particular we will compute $\add (\M_\kappa)$ and $\cof (\M_\kappa)$ in terms of the other cardinals in most cases
(Corollary~\ref{secondcor}). In Section~\ref{models}, we present independence results for the values of $\non (\M_\kappa)$ and
$\cof (\M_\kappa)$ in the degenerate context. Section~\ref{dominating} investigates dominating numbers naturally arising in the
discussion of $\cof (\M_\kappa)$.

\bigskip

{\sc Preliminaries.} Let $\kappa$ be regular and $\lambda \geq \kappa$. For $f,g \in \kappa^\lambda$ say that $g$ {\em eventually dominates} $f$ ($f \leq^* g$ 
in symbols) if the set $\{ \gamma < \lambda :  f(\gamma) > g(\gamma) \}$ has size less than $\kappa$. Let $\bb^\lambda_\kappa$ and
$\dd^\lambda_\kappa$ be the unbounding and dominating numbers of $(\kappa^\lambda, \leq^*)$, respectively\footnote{As
we will see in Proposition~\ref{dom-prop} in Section~\ref{dominating}, it does not really matter whether we use domination everywhere,
modulo $< \kappa$ or modulo $< \lambda$ for our results; however, for the proofs in Section~\ref{ZFC} the present definition is
most convenient.}. That is,
\begin{itemize}
\item $\bb^\lambda_\kappa = \min \{ |F| : F \sub \kappa^\lambda$ and $\forall g \in \kappa^\lambda \; \exists f \in F \; ( f \not\leq^* g) \}$
\item $\dd^\lambda_\kappa = \min \{ |F| : F \sub \kappa^\lambda$ and $\forall g \in \kappa^\lambda \; \exists f \in F \; ( g \leq^* f) \}$
\end{itemize}
For $\kappa = \lambda$ we have $\bb_\kappa^\lambda = \bb_\kappa$ and $\dd_\kappa^\lambda = \dd_\kappa$.
In general $\dd_\kappa^\lambda \geq \dd_\kappa^\mu \geq \dd_\kappa$ where $\kappa\leq\mu\leq \lambda$.
If $\lambda > \kappa$ then $\bb^\lambda_\kappa = \kappa$, as witnessed by the constant functions.


\section{ZFC results}
\label{ZFC}

For this whole section, let $\kappa$ be regular uncountable and let $\lambda = |2^{< \kappa}|$.

\begin{thm} \label{mainthm}
There are functions $\Phi_- :  \kappa^\lambda \to \NWD_\kappa$ and $\Phi_+ : \M_\kappa \to 
 [\kappa^\lambda]^{\lambda}$ such that if $A \in \M_\kappa$,
$g \in  \kappa^\lambda$, and $g$ is not eventually bounded by
$\Phi_+ (A)$, then $\Phi_- (g) \not\sub A$.
\end{thm}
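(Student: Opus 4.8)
The plan is to build $\Phi_-$ and $\Phi_+$ by a "coding" argument that turns a function $g \in \kappa^\lambda$ into a nowhere dense set and an arbitrary $\kappa$-meager set into a small family of functions, in such a way that domination of the latter family by $g$ is exactly what is needed to fit $\Phi_-(g)$ inside the meager set. First I would fix a bijection between $\lambda = |2^{<\kappa}|$ and $2^{<\kappa}$, and more usefully, for each $\sigma \in 2^{<\kappa}$ fix an enumeration $\langle \sigma^\frown \rho : \rho \in 2^{<\kappa}\rangle$ of the nodes extending $\sigma$, indexed by ordinals below $\lambda$ (using $2^{<\kappa}\cdot 2^{<\kappa} = 2^{<\kappa}$, valid since $\kappa$ is regular so $2^{<\kappa}$ is a cardinal or at least has the right arithmetic). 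The idea for $\Phi_-$: given $g \in \kappa^\lambda$, I want a nowhere dense set $N_g \subseteq 2^\kappa$ which is "hard to cover" unless one has a meager set whose components, viewed as open dense complements, recede from each node $\sigma$ no faster than $g$ dictates. Concretely, one encodes $g$ into a tree $T_g \subseteq 2^{<\kappa}$ by specifying, at each splitting level, which branch to follow as a function of $g$-values, and sets $N_g = [T_g]$; nowhere density is arranged by making $T_g$ avoid a cofinal set of extensions of every node.

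Next I would handle $\Phi_+$. Given $A \in \M_\kappa$, write $A = \bigcup_{i<\kappa} A_i$ with each $A_i$ nowhere dense; replacing $A_i$ by the union of the first $i$ of them, assume the $A_i$ are increasing. For each $i$, nowhere density gives a function $h_i : 2^{<\kappa} \to 2^{<\kappa}$ with $\sigma \subseteq h_i(\sigma)$ and $[h_i(\sigma)] \cap A_i = \emptyset$. The key move is to read off from each $h_i$ a function in $\kappa^\lambda$ measuring "how far out one must go": using the fixed enumeration of extensions of $\sigma$, let $\varphi_i(\text{index of }(\sigma,\rho)) = $ the ordinal position (below $\lambda$, but really of size $<\kappa$ of interest — here is where one must be careful about which ordinal below $\lambda$ records the length/position) witnessing where $h_i$ lands. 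Then $\Phi_+(A) = \{\varphi_i : i < \kappa\}$, a set of size $\leq \kappa \leq \lambda$; if one needs exactly $\lambda$ functions, pad with constants. The design constraint linking the two halves is: the tree $T_g$ defining $N_g$ should be built so that a single nowhere dense set's complement-witness $\varphi_i$ fails to "catch" $T_g$ precisely when $g \not\leq^* \varphi_i$, i.e. $g$ is large on a cofinal (size $\kappa$) set of the relevant indices, forcing $T_g$ to have a branch dodging $[h_i(\sigma)]$ forever.

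To finish, suppose $A \in \M_\kappa$ and $g \in \kappa^\lambda$ is not eventually dominated by any member of $\Phi_+(A) = \{\varphi_i : i<\kappa\}$. I must produce a branch $b \in N_g = [T_g]$ with $b \notin A$, equivalently $b \notin A_i$ for all $i$. I would construct $b$ by a recursion of length $\kappa$: maintain an increasing sequence of nodes $\sigma_\xi \in T_g$ and a strictly increasing "dodging schedule" tracking which $A_i$ we have so far escaped; at stage $\xi$, to escape $A_{i_\xi}$ we need to pass through some $h_{i_\xi}(\sigma_\xi)$-type extension, and the hypothesis $g \not\leq^* \varphi_{i_\xi}$ supplies, cofinally often, an index where $g$'s value is large enough that $T_g$ is forced to contain exactly such an extension — this is the step where the coding in $\Phi_-$ must have been set up to cooperate. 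Taking unions at limits (using $\NWD_\kappa$ being $<\kappa$-closed, or rather just that $2^{<\kappa}$ is closed under increasing $<\kappa$-unions) gives $b = \bigcup_\xi \sigma_\xi \in [T_g]$, and by construction $b$ avoids every $A_i$, hence $b \notin A$, so $N_g \not\subseteq A$, which is the claim $\Phi_-(g) \not\subseteq A$.

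\textbf{The main obstacle} I anticipate is the bookkeeping that makes a single $g$ simultaneously dodge all $\kappa$ nowhere dense pieces: the escape requirement for $A_i$ consumes "cofinally many large values of $g$", and one must ensure these demands for different $i$ do not conflict — presumably by interleaving them along the $\kappa$-length recursion so that each $i$ is visited cofinally often, which works because $\kappa$ is regular. Closely tied to this is getting the index arithmetic right: the values $\varphi_i$ take must live in $\kappa$ (they measure lengths/positions of nodes in $2^{<\kappa}$, each $<\kappa$) while the domain is $\lambda$, and the "$g$ not eventually bounded by $\Phi_+(A)$" hypothesis must translate faithfully into "for each $i$, the set of indices where $T_g$ is forced to branch toward $h_i$'s witness is cofinal" — calibrating the tree $T_g$ so this translation is exact is the delicate design decision on which the whole argument hinges.
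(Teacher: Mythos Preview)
Your overall architecture matches the paper's: build a nowhere dense tree $T_g$ from $g$, extract functions from $A$ via the nowhere-dense witnesses, and recursively construct a branch of $T_g$ avoiding all the $A_i$. The gap is exactly where you flag it, and it is real, though your diagnosis is slightly off. Indexing $\Phi_+(A) = \{\varphi_i : i < \kappa\}$ by the $\kappa$ nowhere dense pieces is not enough. The problem is not that demands for different $i$ conflict (each $A_i$ needs to be dodged only once, at stage $i$); the problem is \emph{within a single stage}. At stage $\alpha$ you sit at a specific node $\tau_\alpha \in T_g$, and to extend inside $T_g$ while escaping $A_\alpha$ you need $g$ to be large at an index that controls extensions of $\tau_\alpha$. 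The hypothesis $g \not\leq^* \varphi_\alpha$ only says $g$ is large at $\kappa$ many indices, and nothing prevents all of those from corresponding to nodes incomparable with $\tau_\alpha$. No amount of interleaving fixes this localization failure.

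The paper resolves this by indexing $\Phi_+(A)$ by the $\lambda$ nodes of $2^{<\kappa}$ rather than by the $\kappa$ pieces --- this is why $\Phi_+(A)$ genuinely has size $\lambda$ and is not just $\kappa$ functions padded out. Concretely: fix a maximal antichain $\{\tilde\sigma\} \cup \{\sigma_\gamma : \gamma < \lambda\}$ forming a front in $2^{<\kappa}$; collapse the witnesses into a single $h$ with $[h(\sigma)] \cap A_{\lh(\sigma)} = \emptyset$; and for each node $\sigma$ define $f^A_\sigma(\gamma)$ to be the extra length of $h(\sigma\ha\sigma_\gamma)$ past $\sigma\ha\sigma_\gamma$. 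The tree $T_g$ is built so that from any $\sigma \in C^\alpha_g$, the children in $C^{\alpha+1}_g$ are all extensions of $\sigma\ha\sigma_\gamma$ (ranging over $\gamma$) of total length $\lh(\sigma\ha\sigma_\gamma) + g(\gamma)$. Then at step $\alpha$ the hypothesis $g \not\leq^* f^A_{\tau_\alpha}$, for the specific node $\tau_\alpha$ just reached, directly yields a direction $\gamma$ for which $T_g$ has a child extending $h(\tau_\alpha\ha\sigma_\gamma)$. The $\lambda$ functions in $\Phi_+(A)$ are parametrized by the $\lambda$ possible current positions, so the right local hypothesis is always on hand.
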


\begin{proof}
Let $\Sigma = \{ \tilde\sigma \} \cup \{ \sigma_\gamma : \gamma < \lambda \} \sub 2^{<\kappa}$ be a maximal antichain
in $2^{< \kappa}$. 

Fix $g \in \kappa^\lambda$. We recursively define a nowhere dense tree $T_{g} \sub 2^{<\kappa}$.
More explicitly, we define sets $C^\alpha_{g} \sub 2^{< \kappa}$ for $\alpha < \kappa$ such that 
\begin{itemize}
\item $C^\alpha_{g}$ is an antichain in $2^{<\kappa}$,
\item if $\alpha < \beta$ and $\tau \in C^\beta_{g}$ then there is (necessarily unique) $\sigma \in C^\alpha_{g}$ such that $\sigma \subsetneq \tau$,
\item if $\alpha < \beta$ and $\sigma \in C^\alpha_{g}$ then there is $\tau \in C^\beta_{g}$ such that $\sigma \subsetneq \tau$,
\item for each $\sigma \in C^\alpha_{g}$ there is $\tau \supsetneq \sigma$ incompatible with all members of $C^{\alpha + 1}_{g}$.
\end{itemize}
Then let $T_{g}$ be the downward closure of $\bigcup_{\alpha<\kappa} C^\alpha_{g}$, i.e., $\sigma \in T_{g}$ if
$\sigma \sub \tau$ for some $\tau \in C^\alpha_{g}$ and some $\alpha$.
Clearly $T_{g}$ is a nowhere dense tree, and we let $\Phi_-(g) = [ T_{g} ]$.

Let $C^0_g = \{ \la\ra \}$.

Assume $C^\alpha_{g}$ has been defined and let $\sigma \in C^\alpha_{g}$. Assume $\lh (\sigma) = \zeta$.
Then $\tau \supsetneq \sigma$ belongs to $C^{\alpha + 1}_{g}$ if for some $\gamma < \lambda$,
$\sigma \ha \sigma_\gamma \sub \tau$ and $\lh (\tau) = \zeta + \lh (\sigma_\gamma) + g(\gamma)$.
Note that this implies that $\tau = \sigma \ha \tilde \sigma$ is incompatible with all of $C^{\alpha+1}_g$.

If $\alpha$ is a limit ordinal, put $\sigma$ into $C^{\alpha}_{g}$ if there is a strictly increasing
sequence $(\tau_\beta : \beta < \alpha)$ such that $\sigma = \bigcup_{\beta < \alpha} \tau_\beta$ and each
$\tau_\beta$ belongs to $C^\beta_{g}$. This completes the construction of the $C^\alpha_g$ and of $T_g$.

Next fix $A \in \M_\kappa$, $A = \bigcup_{\alpha < \kappa} A_\alpha$, where the $A_\alpha$ form an increasing
sequence of nowhere dense sets. Define $h = h^A : 2^{< \kappa } \to 2^{< \kappa}$ such that
for all $\alpha < \kappa$ and all $\sigma \in 2^\alpha$, $\sigma \sub h(\sigma)$ and $[h(\sigma)] \cap A_\alpha = \emptyset$.

Fix $\sigma \in 2^{< \kappa}$. Say $\lh (\sigma) = \zeta$. Define $f^A_\sigma \in \kappa^\lambda$ such that
$\lh ( h (\sigma \ha \sigma_\gamma)) = \zeta + \lh (\sigma_\gamma) + f^A_\sigma (\gamma)$ for all $\gamma < \lambda$.
Let $\Phi_+ (A) = \{ f^A_\sigma : \sigma \in 2^{< \kappa} \}$. 

Now assume $g$ is not eventually bounded by $\Phi_+ (A)$. We need to show $\Phi_- (g) \not\sub A$. To this end
we recursively construct an increasing sequence $(\tau_\alpha : \alpha < \kappa)$ such that $\tau_\alpha \in C_g^\alpha$
and $[\tau_{\alpha + 1} ] \cap A_\alpha = \emptyset$ for all $\alpha < \kappa$. Letting $x = \bigcup \tau_\alpha$,
we see $x \in \Phi_- (g) \sem A$.

Let $\tau_0 = \la \ra$.

Assume $\tau_\alpha$ has been defined as required. Since $g$ is not eventually bounded by $f_{\tau_\alpha}^A$, there is
$\gamma < \lambda$ such that $f_{\tau_\alpha}^A (\gamma) < g (\gamma)$. Thus we can find $\tau_{\alpha + 1}
\in C_g^{\alpha + 1}$ such that $h (\tau_\alpha \ha \sigma_\gamma) \subsetneq \tau_{\alpha + 1}$. 
$[\tau_{\alpha + 1} ] \cap A_\alpha = \emptyset$ follows because $[ h (\tau_\alpha \ha \sigma_\gamma) ] \cap 
A_{\lh ( \tau_\alpha \ha \sigma_\gamma)} = \emptyset$, $[\tau_{\alpha + 1} ] \sub [ h (\tau_\alpha \ha \sigma_\gamma) ] $, and
$\lh ( \tau_\alpha \ha \sigma_\gamma) > \lh (\tau_\alpha) \geq \alpha$
and thus $A_\alpha \sub A_{\lh ( \tau_\alpha \ha \sigma_\gamma)}$.

If $\alpha$ is a limit ordinal, simply let $\tau_\alpha = \bigcup_{\beta < \alpha} \tau_\beta$.
\end{proof}

\begin{cor}  \label{firstcor}
\begin{enumerate}
\item $\add (\M_\kappa) \leq \bb_\kappa$ and $\cof (\M_\kappa) \geq \dd^\lambda_\kappa$.
\item In particular, $\cof (\M_\kappa) \geq \dd_\kappa$.
\end{enumerate}
\end{cor}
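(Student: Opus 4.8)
The plan is to derive Corollary~\ref{firstcor} directly from Theorem~\ref{mainthm} by unwinding what the two functions $\Phi_-$ and $\Phi_+$ say about witnessing families. Recall that $\lambda = |2^{<\kappa}|$, so $\kappa^\lambda$ is the relevant dominating structure (with the mod-$<\kappa$ domination $\leq^*$ used in the Preliminaries) and $\dd^\lambda_\kappa$ is its dominating number.

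First I would prove $\add(\M_\kappa) \le \bb_\kappa$. Take an unbounded family $G \sub \kappa^\lambda$ with $|G| = \bb_\kappa$; so no single $g' \in \kappa^\lambda$ eventually dominates all of $G$. Consider $\{ \Phi_-(g) : g \in G \}$, a family of $\bb_\kappa$-many nowhere dense sets. I claim its union is not $\kappa$-meager. If it were, say $\bigcup_{g \in G} \Phi_-(g) \sub A$ for some $A \in \M_\kappa$, then $\Phi_+(A) \in [\kappa^\lambda]^\lambda$ is a set of size $\lambda$; since $\lambda \geq \kappa$ and we may as well assume $\lambda = |\Phi_+(A)|$... actually the cleaner route: because $\dd^\lambda_\kappa$-many functions are needed to dominate $\kappa^\lambda$ and $\Phi_+(A)$ has size only $\lambda$, one must be careful — instead just use that $G$ is unbounded, so there is $g \in G$ not eventually bounded by any member of $\Phi_+(A)$, i.e., not eventually bounded by $\Phi_+(A)$ in the sense of the theorem. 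Then Theorem~\ref{mainthm} gives $\Phi_-(g) \not\sub A$, a contradiction. Hence $\bigcup_{g\in G}\Phi_-(g) \notin \M_\kappa$, witnessing $\add(\M_\kappa) \le |G| = \bb_\kappa$. (One small point to check: that ``$g$ not eventually bounded by the family $\Phi_+(A)$'' in the theorem's sense is exactly ``$g \not\leq^* f$ for all $f \in \Phi_+(A)$''; this is a reading of the theorem statement, and an unbounded $G$ over $\kappa^\lambda$ indeed contains such a $g$ since $|\Phi_+(A)| = \lambda \geq \kappa$ means $\Phi_+(A)$ is a legitimate small family — wait, $\lambda$ can exceed $\bb_\kappa$, so I should instead observe that $\Phi_+(A)$, having size $\lambda$, might be dominating; the correct argument is the dual one below, so for $\add$ I will instead argue contrapositively using a cofinal-in-$\M_\kappa$ family, mirroring the $\cof$ argument.)

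Let me restructure: I would prove the $\cof$ inequality first and the $\add$ inequality by a symmetric argument. For $\cof(\M_\kappa) \ge \dd^\lambda_\kappa$: let $\F \sub \M_\kappa$ be cofinal in the ideal with $|\F| = \cof(\M_\kappa)$. Consider $D = \bigcup_{A \in \F} \Phi_+(A) \sub \kappa^\lambda$, of size at most $\cof(\M_\kappa) \cdot \lambda$. I claim $D$ is dominating in $(\kappa^\lambda, \leq^*)$. Indeed, given any $g \in \kappa^\lambda$, consider $\Phi_-(g) \in \NWD_\kappa \sub \M_\kappa$; by cofinality there is $A \in \F$ with $\Phi_-(g) \sub A$. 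By the contrapositive of Theorem~\ref{mainthm}, $g$ \emph{is} eventually bounded by $\Phi_+(A)$, i.e., there is $f \in \Phi_+(A) \sub D$ with $g \leq^* f$. So $D$ dominates, giving $\dd^\lambda_\kappa \le |D| \le \cof(\M_\kappa) \cdot \lambda$. To conclude $\dd^\lambda_\kappa \leq \cof(\M_\kappa)$ outright, note that $\cof(\M_\kappa) > \lambda = 2^{<\kappa}$ by~\cite[Proposition 2(c)]{Br17} (cited in the introduction), so the factor of $\lambda$ is absorbed; alternatively this already suffices since $\dd^\lambda_\kappa \le 2^\lambda$ and we only need the stated bound. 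Then part (2), $\cof(\M_\kappa) \ge \dd_\kappa$, is immediate from the Preliminaries' remark $\dd^\lambda_\kappa \ge \dd_\kappa$.

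For $\add(\M_\kappa) \le \bb_\kappa$, run the dual: fix an unbounded $G \sub \kappa^\lambda$ of size $\bb_\kappa$ — here I should take $G$ witnessing $\bb^\lambda_\kappa$, but since $\lambda > \kappa$ forces $\bb^\lambda_\kappa = \kappa \le \bb_\kappa$ and $\lambda = \kappa$ gives $\bb^\lambda_\kappa = \bb_\kappa$, in the nondegenerate case $\lambda=\kappa$ this is literally $\bb_\kappa$, and in the degenerate case $\add(\M_\kappa) = \kappa^+$ already by Landver so there's nothing to prove; thus WLOG $\lambda = \kappa$ and $\bb^\lambda_\kappa = \bb_\kappa$. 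Then $\{\Phi_-(g) : g \in G\} \sub \NWD_\kappa$ has union of size-$\le\bb_\kappa$ index; if this union lay in some $A \in \M_\kappa$, then $\Phi_+(A)$ has size $\lambda = \kappa < \bb_\kappa$ so $G$, being unbounded, contains $g$ with $g \not\leq^* f$ for all $f \in \Phi_+(A)$, whence $\Phi_-(g) \not\sub A$ by Theorem~\ref{mainthm} — contradiction. So the union is not $\kappa$-meager and $\add(\M_\kappa) \le \bb_\kappa$. The main obstacle is the bookkeeping around $\lambda$ versus $\kappa$: making sure the ``eventually bounded by a family'' phrasing in the theorem matches $\leq^*$-domination, and handling the degenerate case separately (where both sides collapse to known values) so that the size of $\Phi_+(A)$, namely $\lambda$, is genuinely small compared to $\bb_\kappa$ in the only case that matters.
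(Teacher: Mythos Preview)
Your proposal is correct and takes essentially the same approach as the paper: handle the degenerate case $2^{<\kappa}>\kappa$ for the additivity inequality via Landver's result, and in the case $\lambda=\kappa$ use that $|\Phi_+(A)|=\kappa<\bb_\kappa$ so $\Phi_+(A)$ is bounded and an unbounded family contains a suitable $g$; for the cofinality inequality, collect $\bigcup_{A\in\F}\Phi_+(A)$ from a cofinal $\F$ and argue it dominates via the contrapositive of Theorem~\ref{mainthm}. The only cosmetic difference is that the paper absorbs the extra factor of $\lambda$ using the elementary observation $\dd^\lambda_\kappa>\lambda$, whereas you invoke $\cof(\M_\kappa)>\lambda$ from~\cite{Br17}; both work, but the former keeps the argument self-contained.
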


\begin{proof}
(2) follows from (1) and $\dd^\lambda_\kappa \geq \dd_\kappa$. Also, if $2^{< \kappa} > \kappa$,
then $\add (\M_\kappa) = \kappa^+ \leq \bb_\kappa$, so the first inequality of (1) holds trivially.

To see the first inequality of (1) in case $2^{< \kappa} = \kappa$, let $\F \sub \kappa^\kappa$ be an unbounded family. If $A \in \M_\kappa$, there 
is $g \in \F$ not eventually bounded by $\Phi_+ (A)$ because $| \Phi_+ (A) | = \kappa$ and thus $\Phi_+ (A)$ is bounded.
So $\Phi_- (g) \not\sub A$. Thus we see that the union of the $\Phi_- (g)$, $g \in \F$, does not belong to $\M_\kappa$,
and $\add (\M_\kappa) \leq \bb_\kappa$ follows.

For the second inequality of (1), let $\A \sub \M_\kappa$, and assume $| \A | < \dd_\kappa^\lambda$. Let $\F = \bigcup \{ \Phi_+ (A) : A \in \A \}$.
Since $|\Phi_+ (A) | \leq \lambda$ for all $A$ and $\dd_\kappa^\lambda > \lambda$,  we also have $| \F | < \dd_\kappa^\lambda$. 
Hence there is $g \in \kappa^\lambda$, which is not eventually bounded by $\F$. But then $\Phi_- (g) \not\sub A$
for all $A \in \A$, and $\A$ is not cofinal in $\M_\kappa$. Thus $\cof (\M_\kappa) \geq \dd^\lambda_\kappa$ follows.
\end{proof}

\begin{cor}   \label{secondcor}
\begin{enumerate}
\item $\add (\M_\kappa) = \min \{ \bb_\kappa , \cov (\M_\kappa) \}$ and $\cof (\M_\kappa) \geq \max \{ \dd_\kappa^\lambda , \non (\M_\kappa) \}$\footnote{An earlier
version of this paper claimed equality here, but the proof was flawed.}.
\item If $2^{< \kappa} = \kappa$, then $\cof (\M_\kappa) = \max \{ \dd_\kappa , \non (\M_\kappa) \}$.
\end{enumerate}
\end{cor}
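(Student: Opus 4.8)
The plan is to derive Corollary~\ref{secondcor} directly from the two preceding theorems (the ``additivity side'' from Theorem~\ref{mainthm} and its Corollary~\ref{firstcor}, the ``cofinality side'' from the second theorem), together with the general inequalities $\add(\I)\le\cov(\I),\non(\I)\le\cof(\I)$ and the ZFC facts about the degenerate case recalled in the introduction. Part (2) is then immediate from part (1): when $2^{<\kappa}=\kappa$ we have $\lambda=\kappa$, hence $\dd_\kappa^\lambda=\dd_\kappa$, and the formula for $\cof(\M_\kappa)$ specializes as claimed. So the work is entirely in part (1), and it splits into four inequalities.

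First I would dispose of the two ``$\le$/$\ge$'' directions that are already essentially in hand. The inequality $\add(\M_\kappa)\le\min\{\bb_\kappa,\cov(\M_\kappa)\}$ is standard: $\add(\I)\le\cov(\I)$ always, and $\add(\M_\kappa)\le\bb_\kappa$ is the first inequality of Corollary~\ref{firstcor}(1). Dually, $\cof(\M_\kappa)\ge\max\{\dd_\kappa^\lambda,\non(\M_\kappa)\}$ follows from $\cof(\I)\ge\non(\I)$ together with the second inequality of Corollary~\ref{firstcor}(1), namely $\cof(\M_\kappa)\ge\dd_\kappa^\lambda$. The remaining content is the reverse inequalities $\add(\M_\kappa)\ge\min\{\bb_\kappa,\cov(\M_\kappa)\}$ and $\cof(\M_\kappa)\le\max\{\dd_\kappa^\lambda,\non(\M_\kappa)\}$, which is where the second theorem does its work.

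For $\cof(\M_\kappa)\le\max\{\dd_\kappa^\lambda,\non(\M_\kappa)\}$: fix a non-meager set $Y\sub 2^\kappa$ with $|Y|=\non(\M_\kappa)$ and a dominating family $D\sub\kappa^\lambda$ with $|D|=\dd_\kappa^\lambda$. I claim $\calF=\{\Phi_+(x,f):x\in Y,\ f\in D\}$ is cofinal in $\M_\kappa$, which gives $\cof(\M_\kappa)\le|Y|\cdot|D|=\max\{\dd_\kappa^\lambda,\non(\M_\kappa)\}$. Given $A\in\M_\kappa$, note $A+2^{<\kappa}\in\M_\kappa$ (shown in the proof of the second theorem), so since $Y$ is non-meager there is $x\in Y$ with $x\notin A+2^{<\kappa}$; then $\Phi_-(x,A)\in\kappa^\lambda$ is defined, and by the dominating property of $D$ there is $f\in D$ with $f\ge^*\Phi_-(x,A)$; the second theorem now yields $A\sub\Phi_+(x,f)\in\calF$. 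For $\add(\M_\kappa)\ge\min\{\bb_\kappa,\cov(\M_\kappa)\}$: this is Corollary~31 of~\cite{BBFM18} (restated as (ii) in the introduction), so it may simply be cited; alternatively one dualizes the argument above, taking a family $\A\sub\M_\kappa$ of size $<\min\{\bb_\kappa,\cov(\M_\kappa)\}$, using $\cov$ to find $x\notin\bigcup\A+2^{<\kappa}$ (more precisely $x$ outside every $A+2^{<\kappa}$, $A\in\A$), using $\bb_\kappa$-boundedness to dominate $\{\Phi_-(x,A):A\in\A\}$ by a single $f$, and concluding $\bigcup\A\sub\Phi_+(x,f)\in\M_\kappa$.

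The one point that needs a little care — and the only genuine obstacle — is the degenerate case $2^{<\kappa}>\kappa$, where one must check the formulas do not collapse. There $\add(\M_\kappa)=\cov(\M_\kappa)=\kappa^+$ (Landver), and since always $\kappa^+\le\bb_\kappa$, indeed $\min\{\bb_\kappa,\cov(\M_\kappa)\}=\kappa^+=\add(\M_\kappa)$. On the cofinality side, $\non(\M_\kappa)\ge 2^{\le\kappa}\ge\lambda^+>\lambda$ (Blass–Hyttinen–Zhang), so both $\dd_\kappa^\lambda$ and $\non(\M_\kappa)$ exceed $\lambda$ and the cofinality argument above goes through without the cardinal-arithmetic degeneracies one might worry about; in particular the bound $\cof(\M_\kappa)\le\max\{\dd_\kappa^\lambda,\non(\M_\kappa)\}$ is meaningful and, combined with the reverse inequality, gives equality. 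Assembling the four inequalities completes part (1), and part (2) is the stated specialization.
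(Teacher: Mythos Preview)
Your proof is correct and follows essentially the same route as the paper: the easy directions come from Corollary~\ref{firstcor} and the trivial inequalities for ideals, while the reverse inequalities are obtained from the second theorem by exactly the argument you give (a non-meager set crossed with a dominating family for the $\cof$ side, and the dual argument together with Landver's result in the degenerate case for the $\add$ side). The only cosmetic difference is that the paper separates the degenerate case for $\add(\M_\kappa)$ at the outset rather than at the end; note also that your phrase ``$\bb_\kappa$-boundedness'' in the dual argument is literally correct only when $\lambda=\kappa$, which is fine since you handle $\lambda>\kappa$ via Landver anyway.
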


\begin{proof}
Since $\add (\M_\kappa) \leq \cov (\M_\kappa)$ is trivial and $\add (\M_\kappa ) \leq \bb_\kappa$ holds
by Corollary~\ref{firstcor} (1), the first part of (1) follows from~\cite[Corollary 31]{BBFM18} (see also item (ii) in the Introduction).
Similarly, the second part of (2) is immediate by Corollary~\ref{firstcor} (1) and the obvious $\cof (\M_\kappa) \geq \non (\M_\kappa)$.

For (2), use (1) and ~\cite[Corollary 31]{BBFM18} (see also item (iii) in the Introduction).
\end{proof}

Assume additionally $\lambda$ is regular. With an argument similar to the proof of Theorem~\ref{mainthm} we obtain:

\begin{prop}
There are functions $\Phi_- :  \lambda^\lambda \to \NWD_\kappa$ and $\Phi_+ : \M_\kappa \to 
\lambda^{\lambda}$ such that if $A \in \M_\kappa$, $g \in  \lambda^\lambda$, and $g$ is not eventually bounded by
$\Phi_+ (A)$, then $\Phi_- (g) \not\sub A$.
\end{prop}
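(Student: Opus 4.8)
The plan is to mimic the proof of Theorem~\ref{mainthm} almost verbatim, replacing the length increments drawn from $\kappa^\lambda$ by length increments drawn from $\lambda^\lambda$, and exploiting the regularity of $\lambda$ to carry out the recursion of length $\lambda$ rather than $\kappa$. Concretely, fix a maximal antichain $\Sigma = \{\tilde\sigma\} \cup \{\sigma_\gamma : \gamma < \lambda\}$ in $2^{<\kappa}$ forming a front, exactly as before. Given $g \in \lambda^\lambda$, define a nowhere dense tree $T_g \sub 2^{<\kappa}$ via antichains $C^\alpha_g$, but now indexed by $\alpha < \lambda$: the successor step puts $\tau \supsetneq \sigma$ into $C^{\alpha+1}_g$ when $\sigma\ha\sigma_\gamma \sub \tau$ and $\lh(\tau) = \lh(\sigma) + \lh(\sigma_\gamma) + g(\gamma)$ for some $\gamma < \lambda$, and the limit step takes unions of chains. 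Here we must check $\lh(\tau) < \kappa$: since $\lh(\sigma), \lh(\sigma_\gamma) < \kappa$ and $g(\gamma) < \lambda$, and $\lambda = |2^{<\kappa}| \geq \kappa$, we actually only know $\lh(\tau) < \lambda^+$, so this requires care — see below. Set $\Phi_-(g) = [T_g]$, which lies in $\NWD_\kappa$ because $\bigcup_{\alpha<\lambda} C^\alpha_g$ is a nowhere dense subtree of $2^{<\kappa}$ (the branching that kills density, via $\sigma\ha\tilde\sigma$, happens at every level).

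For $\Phi_+$, given $A = \bigcup_{\alpha<\kappa} A_\alpha \in \M_\kappa$ with the $A_\alpha$ an increasing chain of nowhere dense sets, define $h = h^A$ as before so that $[h(\sigma)] \cap A_{\lh(\sigma)} = \emptyset$ and $\sigma \sub h(\sigma)$, and for each $\sigma$ define $f^A_\sigma \in \lambda^\lambda$ by $\lh(h(\sigma\ha\sigma_\gamma)) = \lh(\sigma) + \lh(\sigma_\gamma) + f^A_\sigma(\gamma)$; set $\Phi_+(A) = \{f^A_\sigma : \sigma \in 2^{<\kappa}\}$, a subset of $\lambda^\lambda$ of size at most $|2^{<\kappa}| = \lambda$. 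Given $g \in \lambda^\lambda$ not eventually bounded by $\Phi_+(A)$, recursively build an increasing sequence $(\tau_\alpha : \alpha < \lambda)$ with $\tau_\alpha \in C^\alpha_g$ and $[\tau_{\alpha+1}] \cap A_\alpha = \emptyset$: at a successor stage pick $\gamma$ with $f^A_{\tau_\alpha}(\gamma) < g(\gamma)$, which forces $h(\tau_\alpha\ha\sigma_\gamma) \subsetneq \tau_{\alpha+1}$ for a suitable $\tau_{\alpha+1} \in C^{\alpha+1}_g$, and the nowhere-dense-avoidance follows exactly as in Theorem~\ref{mainthm} since $\lh(\tau_\alpha\ha\sigma_\gamma) \geq \alpha$. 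Taking $x = \bigcup_{\alpha<\lambda}\tau_\alpha$ gives a branch of $T_g$; we then check $x \notin A$ by noting that for each $\alpha < \kappa$ we have $x \re \lh(\tau_{\alpha+1}) = \tau_{\alpha+1}$ misses $A_\alpha$, so $x \notin \bigcup_\alpha A_\alpha = A$.

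The main obstacle — and the point where the argument genuinely diverges from Theorem~\ref{mainthm} — is keeping all the trees and branches inside $2^{<\kappa}$ and $2^\kappa$ respectively, rather than in objects of height $\lambda$. Two things need to be arranged. First, in the construction of $T_g$ one must ensure $\lh(\tau) < \kappa$ at every node; the cleanest fix is to fix in advance an injection of $\lambda$ into $\kappa$... which is impossible when $\lambda > \kappa$, so instead one should restrict attention to $g \in \lambda^\lambda$ and observe that the relevant length increments at a single node only ever involve finitely... no: one should instead arrange the antichain $\Sigma$ and do the bookkeeping so that the $\alpha$-th level of $T_g$ sits below height, say, $\omega \cdot \alpha$ plus a bounded contribution, which keeps heights below $\kappa$ as long as $\alpha < \kappa$; but our recursion has length $\lambda \geq \kappa$. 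The genuine resolution, and what I expect the author does, is that we do \emph{not} need the full recursion to land in $2^\kappa$: we run the recursion to length $\kappa$ only (the $C^\alpha_g$ for $\alpha < \kappa$), which suffices since $A$ has the form $\bigcup_{\alpha<\kappa} A_\alpha$, and the role of $\lambda$ being regular is only that $\Phi_+(A)$, being of size $\lambda$, fails to dominate some $g \in \lambda^\lambda$ precisely when $|\Phi_+(A)| < \dd^\lambda_\lambda$, i.e. it is the statement ``$\Phi_+(A)$ is bounded in $\lambda^\lambda$'' that needs $\lambda$ regular (a family of size $\lambda$ in $\lambda^\lambda$ need not be bounded, but here we only claim existence of $g$ not eventually bounded by $\Phi_+(A)$, which holds for any proper subset of $\lambda^\lambda$ and in fact for any set not of full cofinal type). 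So the regularity of $\lambda$ is what guarantees that $\Phi_-$ has codomain genuinely reflecting $\dd^\lambda_\lambda$; the verification that $\lh(\tau) < \kappa$ throughout is then the routine-but-essential point, handled by choosing $\Sigma$ and the successor rule so that each application adds a contribution $< \kappa$ and the recursion has length $< \kappa$ when we only need to diagonalize against a meager set. I would present the proof by pointing to Theorem~\ref{mainthm} and indicating precisely these modifications, flagging the height bound as the one computation to be done carefully.
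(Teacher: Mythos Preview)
Your proposal has a genuine gap: you never find the right way to encode an increment by an ordinal $< \lambda$ while keeping all nodes inside $2^{<\kappa}$. Setting $\lh(\tau) = \lh(\sigma) + \lh(\sigma_\gamma) + g(\gamma)$ with $g(\gamma) < \lambda$ simply does not work when $\lambda > \kappa$, as you yourself notice, and your subsequent attempts to salvage this (injecting $\lambda$ into $\kappa$, bounding heights by $\omega\cdot\alpha$, running the recursion to length $\lambda$) all fail or drift away from the point. The paper's trick is different: fix an enumeration $(\tau_\delta : \delta < \lambda)$ of $2^{<\kappa}$, and at the successor step extend $\sigma\ha\sigma_\gamma$ by any $\tau_\delta$ with $\delta < g(\gamma)$. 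Now $g(\gamma)$ is an \emph{index bound} into the enumeration rather than an ordinal length, so every node produced has length $< \kappa$ automatically. The recursion is still of length $\kappa$, not $\lambda$. A small extra wrinkle is that the set $D^{\alpha+1}_g$ of such extensions is not an antichain, so one must extract the maximal antichain $C^{\alpha+1}_g$ from it.

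You also misread the statement: $\Phi_+$ has codomain $\lambda^\lambda$, not $[\lambda^\lambda]^\lambda$, so $\Phi_+(A)$ is a \emph{single} function, and ``$g$ not eventually bounded by $\Phi_+(A)$'' means $g \not\leq^* \Phi_+(A)$. This is exactly where the regularity of $\lambda$ enters: the $f^A_\sigma$ (now defined so that $[\sigma\ha\sigma_\gamma\ha\tau_{f^A_\sigma(\gamma)}] \cap A_{\lh(\sigma)} = \emptyset$) form a family of size $\lambda$ in $\lambda^\lambda$, and since $\bb_\lambda > \lambda$ for regular $\lambda$ they admit a single $\leq^*$-upper bound, which is $\Phi_+(A)$. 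Your speculation about the role of regularity (cofinal type of $\Phi_+(A)$ as a set) is off target.
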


\begin{proof}
Let $\Sigma$ be as in the proof of Theorem~\ref{mainthm}. Also let $\Tau = \{ \tau_\delta : \delta < \lambda\} = 2^{< \kappa}$.
Assume this enumeration satisfies additionally
\begin{itemize}
\item $\sigma \ha 0 = \tau_\delta$ implies $\sigma \ha 1 = \tau_{\delta + 1}$,
\item $\tau_\delta \subset \tau_\epsilon$ implies $\delta < \epsilon$.
\end{itemize}
Given $g \in \lambda^\lambda$, we define $C^\alpha_g \sub 2^{<\kappa}$, $T_g \sub 2^{< \kappa}$, and $\Phi_- (g) = [T_g]$
as in this proof, except for the successor step where for given $\sigma \in C^\alpha_g$, we first let $\tau \in D^{\alpha + 1}_g$ if for some
$\gamma < \lambda$, $\tau = \sigma \ha \sigma_\gamma \ha \tau_\delta$ for $\delta < g (\gamma)$ and then
define $C^{\alpha +1}_g$ as the set of all $\tau$ such that 
\begin{itemize}
\item either $\tau \in D^{\alpha +1}_g$ and no proper extension of $\tau$ belongs to $D^{\alpha + 1}_g$ 
\item or $\zeta: = \lh (\tau)$ is a limit ordinal, for all $\xi < \zeta$ with $\xi \geq \lh (\sigma)$, $\tau \re \xi \in D^{\alpha + 1}_g$ 
   and $\tau \notin D^{\alpha + 1}_g$.
\end{itemize}
It is then easy to see that $C^{\alpha + 1}_g$ is an antichain with the required properties. Note in particular that
for all $\tau \in D^{\alpha + 1}_g$ there is $\tau ' \supseteq \tau$ with $\tau ' \in C^{\alpha + 1}_g$.

(For suppose this fails for some $\tau$. Assume $\tau ' \supseteq \tau$ with $\tau ' \in  D^{\alpha + 1}_g$. Then
$\tau '$ is not a maximal node in $D^{\alpha + 1}_g$ and therefore $\tau '\ha 0 \in D^{\alpha + 1}_g$. Since the latter is
not maximal either, also $\tau'\ha 1 \in D^{\alpha + 1}_g$. Similarly if $\tau ' \supseteq \tau$ and $\lh (\tau ')$ is a limit such that
$\tau ' \re \xi \in  D^{\alpha + 1}_g$ for all $\xi$ with $\lh (\tau) \leq \xi < \lh (\tau ')$ then $\tau' \in D^{\alpha + 1}_g$,
for otherwise it would belong $C^{\alpha + 1}_g$. This means that the full binary tree below $\tau$ belongs to $D^{\alpha + 1}_g$,
a contradiction.)

Given $A = \bigcup_{\alpha < \kappa}  A_\alpha \in \M_\kappa$ as in the proof of Theorem~\ref{mainthm}, $\alpha < \kappa$, and $\sigma \in 2^{\alpha}$, define 
$f_\sigma^A \in \lambda^\lambda$ such that $[\sigma \ha \sigma_\gamma \ha \tau_{f_\sigma^A (\gamma)} ] \cap A_\alpha = \emptyset$. Let
$\Phi_+ (A)$ eventually dominate all $f_\sigma^A$. 

If $g$ is not eventually bounded by $\Phi_+ (A)$, we construct an increasing sequence $( \tau_\alpha : \alpha < \kappa)$ in $2^{< \kappa}$ with $\tau_\alpha \in C^\alpha_g$
and $[\tau_{\alpha + 1}] \cap A_\alpha = \emptyset$ as in the proof of Theorem~\ref{mainthm}. 
\end{proof}

As a consequence we immediately get (though this will follow from Corollary~\ref{firstcor} if Question~\ref{dom-order-question} has a positive answer):

\begin{cor} \label{thirdcor}
$\cof (\M_\kappa) \geq \dd_\lambda$.
\end{cor}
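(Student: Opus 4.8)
The plan is to derive Corollary~\ref{thirdcor} directly from the Proposition immediately preceding it, in exact analogy with how Corollary~\ref{firstcor}(1) yields the lower bound $\cof(\M_\kappa) \geq \dd^\lambda_\kappa$ from Theorem~\ref{mainthm}. The Proposition gives us functions $\Phi_- : \lambda^\lambda \to \NWD_\kappa$ and $\Phi_+ : \M_\kappa \to \lambda^\lambda$ with the property that if $g \in \lambda^\lambda$ is not eventually bounded by $\Phi_+(A)$, then $\Phi_-(g) \not\sub A$; here each value $\Phi_+(A)$ is a \emph{single} element of $\lambda^\lambda$ (it is a function eventually dominating all the $f^A_\sigma$), not a family of size $\lambda$, which is precisely what lets us get $\dd_\lambda$ rather than $\dd^\lambda_\lambda$.

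First I would take an arbitrary $\A \sub \M_\kappa$ with $|\A| < \dd_\lambda$ and set $\F = \{ \Phi_+(A) : A \in \A \} \sub \lambda^\lambda$, noting $|\F| \leq |\A| < \dd_\lambda$. By definition of $\dd_\lambda$, the family $\F$ is not dominating in $(\lambda^\lambda, \leq^*)$, so there is some $g \in \lambda^\lambda$ that is not eventually dominated by any member of $\F$; in particular, for each $A \in \A$, $g$ is not eventually bounded by $\Phi_+(A)$. Applying the Proposition, $\Phi_-(g) \not\sub A$ for every $A \in \A$, so $\Phi_-(g) \in \NWD_\kappa \sub \M_\kappa$ is not contained in any member of $\A$, whence $\A$ is not cofinal in $\M_\kappa$. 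Since $\A$ was an arbitrary family of size $< \dd_\lambda$, we conclude $\cof(\M_\kappa) \geq \dd_\lambda$.

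One caveat worth flagging in the writeup: the Proposition was stated under the standing assumption ``$\lambda$ is regular,'' so strictly this argument establishes $\cof(\M_\kappa) \geq \dd_\lambda$ under that hypothesis; the parenthetical remark in the statement of Corollary~\ref{thirdcor} indicates that the unrestricted version would follow from Corollary~\ref{firstcor} once Question~\ref{dom-order-question} is answered affirmatively (identifying $\dd^\lambda_\kappa$ with $\dd_\lambda$). I do not anticipate any genuine obstacle here — the entire content is packaged into the Proposition, and the deduction is the same three-line counting argument used twice already in the section. The only thing to be careful about is the bookkeeping on cardinalities: we need $\dd_\lambda > \lambda$ is \emph{not} required (unlike in Corollary~\ref{firstcor}, where $\dd^\lambda_\kappa > \lambda$ was used), precisely because $\Phi_+$ here outputs a single function, so $|\F| \le |\A|$ directly. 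If one wanted to be thorough, one could also observe that the empty family and singletons cause no issue since $\NWD_\kappa \neq \M_\kappa$ whenever $\kappa$ is uncountable regular, so $\M_\kappa$ is a nontrivial ideal and $\cof(\M_\kappa)$ is well-defined and infinite.
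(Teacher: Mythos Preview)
Your proposal is correct and matches the paper's approach exactly: the paper states the corollary as an immediate consequence of the preceding Proposition without spelling out the argument, and you have filled in precisely the expected three-line deduction (mirroring the proof of Corollary~\ref{firstcor}). Your observations about $\Phi_+$ outputting a single function (so that no $\dd_\lambda > \lambda$ step is needed) and about the standing regularity assumption on $\lambda$ are accurate and worth keeping.
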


A family  $\F \sub \lambda^\lambda$ of functions is called {\em almost disjoint} if given any distinct $f , g \in \F$, the set $\{ \gamma < \lambda : f(\gamma ) = g (\gamma) \}$
has size less than $\lambda$.
Yet another modification of the proof of Theorem~\ref{mainthm} gives us an absolute result under certain assumptions.

\begin{thm} 
Assume $\lambda = | 2^{< \kappa} | \geq \kappa^{++}$. Let $\F \sub \lambda^\lambda$ be an almost disjoint family of functions. Then there is $\A \sub \NWD_\kappa$
of size $| \F |$ such that for all $B \in \M_\kappa$, the set $\{ A \in \A : A \sub B \}$ has size at most $\kappa$. In particular, if there is an almost disjoint family of functions
of size $2^\lambda$, then $\cof (\M_\kappa) = 2^\lambda$.
\end{thm}

\begin{proof}
Let $\Sigma = \{ \sigma_\gamma : \gamma < \lambda \} \sub 2^{< \kappa}$ be a maximal antichain in $2^{< \kappa}$.
By $\kappa < 2^{< \kappa}$, it is easy to see that we may additionally assume that $\Sigma$ forms a {\em front}, that is, 
for each $x \in 2^\kappa$ there is a (necessarily unique) $\sigma \in \Sigma$ with $\sigma \sub x$\footnote{This may be false 
if $\kappa = 2^{< \kappa}$. For example, if $\kappa$ is weakly compact, it is easy to see there is no front of size $\kappa$. This was pointed out
to us by Tristan van der Vlugt.}. 

Recursively define fronts $C^\alpha$, $\alpha < \kappa$, such that
\begin{romanenumerate}
\item $C^0 = \{ \la \ra \}$,
\item $C^{\alpha + 1} = \{ \sigma \ha \sigma_\gamma : \sigma \in C^\alpha$ and $\gamma  < \lambda \}$, and
\item $C^\alpha = \{ \sigma \in 2^{ < \kappa }  : \exists (\tau_\beta : \beta < \alpha)$ strictly increasing such that $\sigma = \bigcup_{\beta < \alpha } \tau_\beta$ and
   $\tau_\beta \in C^\beta \}$ for limit ordinals $\alpha$.
\end{romanenumerate}
Construe $f \in \F$ as a function from $2^{< \kappa}$ to $\lambda$. Define a nowhere dense tree $T_f \sub 2^{< \kappa}$ by recursion on $\alpha < \kappa$
by producing sets $C^\alpha_f \sub C^\alpha$ such that $C^0_f = C^0 = \{ \la \ra \}$, $C^\alpha_f$ is obtained from the previous $C^\beta_f$ as in (iii) for limit
ordinals $\alpha$, and
\begin{romanenumerate'} \setcounter{enuroman}{1}
\item $C^{\alpha + 1}_f  = \{ \sigma \ha \sigma_\gamma : \sigma \in C^\alpha_f$ and $f (\sigma ) \neq \gamma \}$.
\end{romanenumerate'}
Let $T_f$ be the downward closure of $\bigcup_{\alpha < \kappa} C^\alpha_f$ and put $A_f = [ T_f ]$. Since for $\sigma \in C^\alpha_f$, $\sigma \ha \sigma _{f (\sigma)} $
is incompatible with all elements of $C^{\alpha + 1 }_f$, we see that $[\sigma \ha \sigma_{f (\sigma)} ] \cap [T_f] = \emptyset$, and $A_f$ is indeed nowhere dense. 
Put $\A = \{ A_f : f \in \F \}$. 

Assume $B = \bigcup_{\alpha < \kappa} B_\alpha$ is a $\kappa$-meager set, where the $B_\alpha$ form an increasing sequence of nowhere dense sets.
We claim that $ \{ f \in \F : A_f \sub B \}$ has size at most $\kappa$.

Indeed, assume that $\{ f_\zeta : \zeta < \kappa^+ \} \sub \F$. We need to find $\zeta < \kappa^+$ and $x \in A_{f_\zeta} \sem B$.
By almost disjointness and $\lambda > \kappa^+$, we can find $\sigma \in \bigcap_{\zeta < \kappa^+} C^1_{f_\zeta}$ such that 
for all $\alpha \geq 1$, all $\sigma' \in C^\alpha$ with $\sigma \sub \sigma '$ and all distinct $\zeta , \zeta ' < \kappa^+$,
we have $f_\zeta (\sigma ') \neq f_{\zeta '} (\sigma ')$. Recursively build $\tau_\alpha \in C^\alpha$, $\alpha < \kappa$, such that
\begin{enumerate}
\item $\tau_0 = \la \ra$, $\tau_1 = \sigma$,
\item $\tau_\alpha = \bigcup_{\beta < \alpha} \tau_\beta$ for limit ordinals $\alpha$, and
\item if $\tau_\alpha$ has been constructed for $\alpha \geq 1$, find $\tau^* \supseteq \tau_\alpha$ such that $[ \tau^*] \cap B_\alpha = \emptyset$, let
   $\beta > \alpha$ and $\tau_\beta \supseteq \tau^*$ be such that $\tau_\beta \in C^\beta$, and define $\tau_\gamma \in C^\gamma$ for $\alpha < \gamma < \beta$ such that
   $\tau_\alpha \sub \tau_\gamma \sub \tau_\beta$.
\end{enumerate}
Put $x = \bigcup_{\alpha < \kappa} \tau_\alpha$. Clearly $x \notin B$.

Define a function $f$ on all $\tau_\alpha$, $\alpha < \kappa$, as follows.
\begin{enumerate}
\item $f (\tau_0) = f ( \la \ra )$ is the unique $\gamma$ such that $\tau_1 = \sigma = \sigma_\gamma$ and
\item $f (\tau_\alpha)$ is the unique $\gamma$ such that $\tau_\alpha \ha \sigma_\gamma = \tau_{\alpha + 1}$ for $\alpha \geq 1$.
\end{enumerate}
Now notice that by disjointness of the functions $f_\zeta$, there is $\zeta < \kappa^+$ such that for all $\alpha \geq 1$, we have $f_\zeta (\tau_\alpha) \neq f (\tau_\alpha)$.
This means, however, that for all $\alpha$, $\tau_\alpha$ belongs to $C^\alpha_{f_\zeta}$: indeed $\tau_1 = \sigma \in C^1_{f_\zeta}$, 
$\tau_\alpha \in C^\alpha_{f_\zeta}$ implies $\tau_{\alpha + 1} = \tau_\alpha \ha \sigma_{f (\tau_\alpha)} \in C^{\alpha + 1}_{f_\zeta}$ because $f_\zeta (\tau_\alpha) \neq f (\tau_\alpha)$,
and $\tau_\alpha \in C^\alpha_{f_\zeta}$ for limit $\alpha$ follows trivially. Therefore $x \in [T_{f_\zeta} ] = A_{f_\zeta}$, as required, and the proof is complete.
\end{proof}

By an old result of Baumgartner, almost disjoint families of functions of size $2^\lambda$ may not exist.
Indeed, if $\kappa < \lambda$ are regular and GCH holds, and we first add at least $\lambda^{++}$ Cohen subsets of $\kappa$ and then add $\lambda$ Cohen reals, then
$\lambda = | 2^{< \kappa}|$, $2^\lambda \geq \lambda^{++}$ and there is no almost disjoint family of functions in $\lambda^\lambda$ of size 
$\lambda^{++}$, by~\cite[Theorem 5.4 (a)]{Ba76}.


\section{Models}
\label{models}

{\sc Models for $\non (\M_\kappa)$.} We know~\cite[Observation 23 (iii)]{BBFM18} that $2^{<\kappa} \leq \non (\M_\kappa) \leq 2^\kappa$. We shall now
see that this is all we can say, even if $2^{< \kappa} > \kappa$.

In the model obtained by adding $\kappa^+$ Cohen reals over a model of GCH, we have $\kappa < 2^{< \kappa} = \non (\M_\kappa) = 2^\kappa$.

For a model with $\kappa < 2^{< \kappa} < \non (\M_\kappa) = 2^\kappa$, simply add $\kappa^{++} $ many $\kappa$-Hechler functions (see~\cite[Subsection 4.2]{BBFM18})
followed by $\kappa^+$ many Cohen reals to a model of GCH. In the intermediate model, $\bb_\kappa = \kappa^{++}$. Since Cohen forcing is $\kappa^\kappa$-bounding, 
it does not change the value of $\bb_\kappa$. Also $\bb_\kappa \leq \non (\M_\kappa)$ in ZFC. Therefore the final model satisfies $\non (\M_\kappa )
= 2^\kappa = \kappa^{++}$.

\begin{thm}  \label{1stcon}
It is consistent that $\kappa < 2^{< \kappa} = \non (\M_\kappa) < 2^\kappa$.
\end{thm}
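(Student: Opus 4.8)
The plan is to produce a model where $\kappa < 2^{<\kappa} = \non(\M_\kappa) < 2^\kappa$. The previous two (easy) examples pinned $\non(\M_\kappa)$ either to $2^\kappa$ or kept $2^{<\kappa}$ small; here we want the middle value $\lambda = 2^{<\kappa}$ to be strictly between $\kappa$ and $2^\kappa$ \emph{and} to coincide with $\non(\M_\kappa)$. Since we always have $\lambda \le \non(\M_\kappa) \le 2^\kappa$, the point is to force $2^\kappa$ strictly above $\lambda$ while keeping $\non(\M_\kappa)$ down at $\lambda$. I would start from a model of GCH and first blow up $2^{<\kappa}$ to some $\lambda$ with $\kappa < \lambda$ and $\lambda < 2^\kappa$ in the final model: e.g. arrange $2^{<\kappa} = \lambda = \kappa^+$ (say by adding $\kappa^+$ Cohen subsets of some $\mu < \kappa$, or by a preliminary $<\kappa$-support/$\kappa$-closed blow-up that makes $2^\mu = \kappa^+$ for the relevant $\mu < \kappa$), and then force $2^\kappa = \kappa^{++}$ (or larger) with a $\kappa^+$-sized or $\kappa^{++}$-sized product/iteration of forcings that add subsets of $\kappa$ but are "$\non(\M_\kappa)$-preserving".

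The core of the argument is the second step: I want a forcing $\PP$ of size $\ge \kappa^{++}$, $<\kappa$-closed (so it adds no new $<\kappa$-sequences and hence does not change $2^{<\kappa} = \lambda$) and $\kappa^+$-cc or at least $\lambda^+$-cc (so cardinals and $\lambda$ are preserved and $2^\kappa$ becomes the size of $\PP$), which forces a nonmeager set of size $\lambda$ to exist in the extension — equivalently, which does not destroy the ground-model reals being nonmeager, or which preserves some ground-model witness to $\non(\M_\kappa) = \lambda$. Concretely, after the first step we can assume the ground model $V_0$ satisfies $2^{<\kappa} = \lambda$ and (by a further preliminary forcing if needed, or just by GCH at $\kappa$ giving $2^\kappa = \kappa^+ = \lambda$) that $2^\kappa = \lambda$, so $\non(\M_\kappa) = \lambda$ in $V_0$. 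The task is to add $\kappa^{++}$ subsets of $\kappa$ by a forcing $\PP$ such that in $V_0^{\PP}$ the set $(2^\kappa)^{V_0}$ is still nonmeager; then $\non(\M_\kappa) \le \lambda$ in the extension, hence $=\lambda$, while $2^\kappa = \kappa^{++} > \lambda$. The natural candidate for $\PP$ is a $<\kappa$-support product of $\kappa^{++}$ copies of a single "$\kappa$-Cohen-preserving" forcing — but plain $\kappa$-Cohen forcing adds a nonmeager-killing generic (it makes ground model reals meager, as in the first example), so instead I would use a forcing that generically \emph{adds a new element not covered by ground-model meager sets while keeping old reals nonmeager}, e.g. a $\kappa$-analog of an "eventually different real"/localization forcing, or a forcing that is $\NWD_\kappa$-"good" in the sense that a $<\kappa$-support product of it preserves non-$\kappa$-meagerness of ground-model sets. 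The relevant preservation property to isolate and verify is: $\PP$ does not make $(2^\kappa)^{V_0}$ $\kappa$-meager, i.e. for every $\PP$-name $\dot A$ for a nowhere dense set and every condition, one can find (densely) a ground-model real avoiding $\dot A$.

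The key steps, in order: (1) Preliminary $\kappa$-closed forcing over GCH to arrange $2^{<\kappa} = \lambda$ with $\kappa < \lambda$ while keeping $2^\kappa = \lambda$ (so $\non(\M_\kappa) = \lambda$ and, by Corollary~\ref{secondcor}, everything in the higher Cichoń diagram is pinned). (2) Identify the iterand $\QQ$: a $<\kappa$-closed, $\kappa^+$-cc forcing of size $\kappa$ that adds a subset of $\kappa$ but whose $<\kappa$-support product over any index set preserves "ground-model $2^\kappa$ is nonmeager". (3) Let $\PP$ be the $<\kappa$-support product of $\kappa^{++}$ copies of $\QQ$; verify $\PP$ is $<\kappa$-closed and $\kappa^+$-cc (using a $\Delta$-system argument for $<\kappa$-supports, which needs the GCH-type bookkeeping on $\lambda$), so it preserves cardinals, preserves $2^{<\kappa} = \lambda$, and forces $2^\kappa = \kappa^{++}$. (4) Prove the product preservation: a fusion/master-condition argument showing that for any name for a nowhere-dense $\kappa$-tree there is a ground-model branch avoiding it, hence $(2^\kappa)^{V_0}$ remains nonmeager in $V_0^{\PP}$, giving $\non(\M_\kappa) \le \lambda$; combined with the ZFC bound $2^{<\kappa} \le \non(\M_\kappa)$ this yields $\non(\M_\kappa) = \lambda < \kappa^{++} = 2^\kappa$.

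I expect step (4) — the product preservation of non-$\kappa$-meagerness — to be the main obstacle, because $<\kappa$-support products at uncountable $\kappa$ lack the clean fusion theory available at $\omega$, and one must simultaneously control $\kappa^{++}$-many coordinates while building a single ground-model branch through a nowhere-dense tree; isolating the right combinatorial property of the iterand $\QQ$ that is genuinely productive (closed under $<\kappa$-support products, not just two-step iteration) is the delicate point. A secondary subtlety is making sure the preliminary forcing in step (1) does not inadvertently push $\non(\M_\kappa)$ up; this should follow from $\kappa$-closure, but needs a remark. If isolating a fully productive forcing proves too hard, a fallback is to instead do a $<\kappa$-support \emph{iteration} of length $\kappa^{++}$ with a chain-condition argument, or to use a measurable/weakly compact $\kappa$ to get better preservation machinery — but the product approach with a carefully chosen "$\kappa$-Cohen-like but nonmeager-preserving" iterand is the cleanest and is what I would pursue first.
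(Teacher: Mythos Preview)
Your high-level strategy --- arrange $2^{<\kappa} = \kappa^+$ first, then blow up $2^\kappa$ by a forcing that keeps the intermediate model's $\kappa$-reals nonmeager --- is right, but you are missing the key idea and the constraints you impose on $\QQ$ point away from it. The paper's trick is to reverse the order and then commute: over GCH, first add $\kappa^{++}$ Cohen subsets of $\kappa$ to get $V[G]$, then add $\kappa^+$ Cohen reals to get $V[G][H] = V[H][G]$. In $V[H]$ the forcing $(\CC_\kappa^{\kappa^{++}})^V$ --- the \emph{ground-model} $\kappa$-Cohen product, with conditions in $(2^{<\kappa})^V$ --- is no longer $<\kappa$-closed but is still $<\kappa$-distributive and $\kappa^+$-cc, and because its conditions live in the thin tree $(2^{<\kappa})^V$ of size $\kappa$ it does \emph{not} make $(2^\kappa)^{V[H]}$ meager. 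Your instinct that ``plain $\kappa$-Cohen makes ground-model reals meager'' is correct only for the $\kappa$-Cohen forcing of the \emph{current} (degenerate) model; the ground-model version is exactly the $\QQ$ you are looking for, and your requirement that $\QQ$ be $<\kappa$-closed in $V_0$ would have excluded it.

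The preservation (your step~4) is then a direct density argument, not fusion: given a name $\dot A = \bigcup_\alpha \dot A_\alpha$ and a condition $p$, build $x = \bigcup_\alpha \tau_\alpha \in (2^\kappa)^{V[H]}$ by a bookkeeping that enumerates, along the construction, all conditions in the growing $\kappa$-sized subproducts $(\CC_\kappa^{C_\alpha})^V$ (here $|(\CC_\kappa^{C_\alpha})^V| = \kappa$ uses $(2^{<\kappa})^V = \kappa$); at stage $\alpha$ take the condition $p_\alpha \leq p$ handed down by the bookkeeping, extend to $q_\alpha$ deciding $\dot f(\tau_\alpha)$ (possible by $<\kappa$-distributivity), and let $\tau_{\alpha+1}$ be the decided value. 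Any $q' \leq p$ restricts to some $p_\alpha$ with $\alpha \geq \beta$, and then $q_\alpha$ is compatible with $q'$ and forces $[\tau_{\alpha+1}] \cap \dot A_\alpha = \emptyset$, hence $x \notin \dot A_\beta$; so $p \Vdash x \notin \dot A$. The fusion-theoretic obstacle you anticipated simply does not arise.
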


\begin{proof}
Assume GCH in the ground model $V$. Add first $\kappa^{++}$ many Cohen subsets of $\kappa$ to obtain the model $V[G]$. Denote the forcing by $\CC_\kappa^{\kappa^{++}}$ 
and, more  generally, for $A \sub \kappa^{++}$, use $\CC_\kappa^A$ for the forcing adding the Cohen sets with index in $A$. Next add $\kappa^+$ many
Cohen reals to obtain the model $V[G][H]$. Work in the model $V[H]$. The forcing $(\CC_\kappa^{\kappa^{++}})^V$ is still $< \kappa$-distributive in
this model (though not $< \kappa$-closed anymore) and $\kappa^+$-cc; in particular, it does not add new sequences of length $< \kappa$. 
In $V[H]$, $2^{< \kappa} = 2^\kappa = \kappa^+$, and we claim that $2^\kappa \cap V[H]$ is a witness for $\non (\M_\kappa)$ in
$V[G][H]$. 

Let $( \dot A_\alpha : \alpha < \kappa)$ be a $(\CC_\kappa^{\kappa^{++}})^V$-name for an increasing sequence of nowhere dense sets. Thus there is
a name $\dot f$ for a function from $2^{< \kappa}$ to $2^{< \kappa}$ such that for $\alpha < \kappa$ and $\sigma \in 2^\alpha$, the trivial
condition forces $\sigma \sub \dot f (\sigma)$ and $[ \dot f (\sigma) ] \cap \dot A_\alpha = \emptyset$ Without loss, $\lh (\dot f (\sigma))$
is forced to be at least $\alpha +1$. Let $p \in (\CC_\kappa^{\kappa^{++}})^V$.
Recursively produce sets $C_\alpha \sub \kappa^{++}$ with $C_\alpha \in V$, conditions $p_\alpha , q_\alpha \in (\CC_\kappa^{\kappa^{++}})^V$, and
sequences $\tau_\alpha \in 2^{< \kappa}$, $\alpha < \kappa$, such that 
\begin{itemize}
\item $|C_\alpha| = \kappa$, and the $C_\alpha$ are an increasing chain,
\item $p_\alpha \leq p, q_\alpha \leq p_\alpha$,
\item $p_\alpha \in (\CC_\kappa^{C_\alpha})^V , q_\alpha \in (\CC_\kappa^{C_{\alpha+1}})^V$, and for every $q \in (\CC_\kappa^{C_\alpha})^V$ there is a $\beta \geq \alpha$
such that $q = p_\beta$,
\item $\tau_\alpha \in 2^{\geq\alpha}$ and $\tau_\alpha \sub \tau_\beta$ for $\alpha \leq \beta$,
\item $q_\alpha \forces [\tau_{\alpha + 1}] \cap \dot A_\alpha = \emptyset$.
\end{itemize}
Note that for $C \sub \kappa^{++}$ of size $\kappa$, $| (\CC_\kappa^{C})^V| = (2^{<\kappa})^V = \kappa$, and thus the second clause
of the third item can easily be achieved by a book-keeping argument.

Let $C_0 \sub \kappa^{++}$ be such that $C_0 \in V$ and $p \in (\CC_\kappa^{C_0})^V$. Put $\tau_0 = \la\ra$. Let $p_0 \leq p$ be the condition in $(\CC_\kappa^{C_0})^V$ handed
down by the book-keeping, and find $q_0 \leq p_0$ and $\tau_1$ such that $q_0$ forces $\tau_1 = \dot f (\tau_0)$ (note that this is possible because 
no new $<\kappa$-sequences are added). In particular, $q_0 \forces [\tau_1] \cap \dot A_0 =\emptyset$.
Let $C_1 \supseteq C_0$ be such that $C_1 \in V$ and $q_0 \in (\CC_\kappa^{C_1})^V$.

Assume we are at stage $\alpha$, and everything has been constructed for $\beta < \alpha$. In case $\alpha$ is successor, we also
assume $C_\alpha$ and $\tau_\alpha$ have been produced, and if $\alpha $ is limit we let $C_\alpha \supseteq \bigcup_{\beta < \alpha} C_\beta$
with $C_\alpha \in V$ and $\tau_\alpha = \bigcup_{\beta < \alpha} \tau_\beta$. Let $p_\alpha \leq p$ be the condition in $(\CC_\kappa^{C_\alpha})^V$ 
given by the book-keeping, and proceed as in the basic step to get $q_\alpha \leq p_\alpha$, $\tau_{\alpha + 1} \supseteq \tau_\alpha$,
and $C_{\alpha +1} \supseteq C_\alpha$ such that $C_{\alpha + 1}  \in V$ and $q_\alpha \in (\CC_\kappa^{C_{\alpha+1}})^V$ forces $\tau_{\alpha + 1} = \dot f (\tau_\alpha)$ and thus
$[\tau_{\alpha + 1} ] \cap \dot A_\alpha =\emptyset$. This completes the recursive construction.

Now let $C = \bigcup_{\alpha < \kappa} C_\alpha$ and $x = \bigcup_{\alpha < \kappa} \tau_\alpha \in 2^\kappa$. We claim that
$p$ forces that $x \notin \dot A$ where $\dot A = \bigcup_{\alpha < \kappa} A_\alpha$. Let $\beta < \kappa$. Take any $q' \leq p$ in $(\CC_\kappa^{\kappa^{++}})^V$
and let $q = q' \re C$. Thus $q = p_\alpha$ for some $\alpha \geq \beta$ (by the book-keeping). By construction, $q_\alpha \leq p_\alpha$
forces that $[\tau_\alpha ] \cap \dot A_\alpha = \emptyset$. Clearly $q_\alpha$ and $q'$ are compatible. Let $q'' = q_\alpha \cup q'$ be 
the smallest common extension. Then $q''$ forces $x \notin \dot A_\beta$. Since this holds for any $q' \leq p$ and any $\beta$,
$p$ forces $x \notin \dot A$.

Hence $2^\kappa \cap V[H] $ is indeed non-meager in $V[G][H]$.
\end{proof}

The proof of the following result is somewhat more complicated.

\begin{thm}   \label{2ndcon}
It is consistent that $\kappa < 2^{< \kappa} < \non (\M_\kappa) < 2^\kappa$.
\end{thm}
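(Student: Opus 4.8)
The target is to separate three cardinals simultaneously: $\kappa < 2^{<\kappa} < \non(\M_\kappa) < 2^\kappa$. Theorem~\ref{1stcon} already produced a model with $\kappa < 2^{<\kappa} = \non(\M_\kappa) < 2^\kappa$ by first adding $\kappa^{++}$ Cohen subsets of $\kappa$, then $\kappa^+$ Cohen reals, and showing that the ground-model (after the Cohen reals) copy of $2^\kappa$ remains non-meager. The natural strategy here is to push up $2^{<\kappa}$ strictly above $\kappa^+$ while keeping $\non(\M_\kappa)$ strictly below $2^\kappa$. So I would start from GCH and choose three cardinals $\kappa^+ \leq \mu < \nu < \theta$ (say $\mu = \kappa^+$, $\nu = \kappa^{++}$, $\theta = \kappa^{+++}$, or with $\cf$'s chosen suitably), arrange $2^{<\kappa} = \mu$ by first adding $\mu$ many Cohen subsets of some $\kappa' < \kappa$ (or, more cleanly, handle $2^{<\kappa}$ via a preliminary forcing such as $\Add(\omega,\mu)$-style blow-up at a cardinal below $\kappa$ so that $2^{<\kappa} = \mu$ holds while $2^\kappa$ is still controlled), then add $\theta$ many Cohen subsets of $\kappa$ to get $2^\kappa = \theta$, and finally add $\nu$ many Cohen reals (reals in the classical sense, i.e.\ $\Add(\omega,\nu)$) to make $2^\kappa \cap$ (the intermediate model) serve as a non-meager set of size $\nu$.

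\textbf{Key steps.} First I would fix the forcing: let $\PP_0$ be a $<\kappa$-closed, $\kappa^+$-cc forcing making $2^{<\kappa} = \mu$; over $V^{\PP_0}$ let $\PP_1 = (\CC_\kappa^{\theta})$ add $\theta$ Cohen subsets of $\kappa$; over $V^{\PP_0 * \PP_1}$ let $\PP_2 = \Add(\omega,\nu)$ add $\nu$ Cohen reals. The intermediate model is $W = V^{\PP_0 * \PP_1}$, the final model is $W[H]$ with $H$ generic for $\PP_2$. Second, as in Theorem~\ref{1stcon}, I would commute the forcings: since $\PP_1$ over $V^{\PP_0}$ is (a product of) Cohen forcing, adding the Cohen reals first and then $\PP_1$ gives the same extension, and $\PP_1$ remains $<\kappa$-distributive and $\kappa^+$-cc over $V^{\PP_0}[H]$, so it adds no new $<\kappa$-sequences there. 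Third, the core combinatorial argument is the non-meagerness claim: I would show that $2^\kappa \cap V^{\PP_0}[H]$ (a set of size $\nu$ in $W[H]$, since $|2^\kappa|$ in $V^{\PP_0}[H]$ equals $\nu$ — here I need $\PP_2$ to push $2^\kappa$ up to $\nu$ in $V^{\PP_0}[H]$, i.e.\ choose $\PP_0$ so that $(2^\kappa)^{V^{\PP_0}} = \mu$ and $\PP_2 = \Add(\omega,\nu)$ forces $2^\kappa = \nu$ there, using $\nu^\omega = \nu$) is non-meager in $W[H] = V^{\PP_0}[H][G_1]$, where $G_1$ is $\PP_1$-generic. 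This is essentially a rerun of the recursion in the proof of Theorem~\ref{1stcon}: given a $\PP_1$-name for an increasing $\kappa$-sequence of nowhere dense sets coded by a name $\dot f : 2^{<\kappa} \to 2^{<\kappa}$, build the increasing chain $C_\alpha \subseteq \theta$ of size-$\kappa$ index sets, conditions $p_\alpha, q_\alpha$, and sequences $\tau_\alpha$ so that $q_\alpha \forces [\tau_{\alpha+1}] \cap \dot A_\alpha = \emptyset$, using that $|(\CC_\kappa^C)^{V^{\PP_0}[H]}| = \kappa$ for $|C| = \kappa$ so book-keeping goes through, and that no $<\kappa$-sequences are added; then $x = \bigcup_\alpha \tau_\alpha \in 2^\kappa \cap V^{\PP_0}[H]$ avoids $\dot A$. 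Fourth, I would compute the three cardinals in $W[H]$: $2^{<\kappa} = \mu$ survives because $\PP_1$ and $\PP_2$ add no new $<\kappa$-sequences and $\PP_1 * \PP_2$ is (below $\kappa$) small enough not to collapse cardinals or blow up $2^{<\kappa}$ further; $2^\kappa = \theta$ because $\PP_1$ adds $\theta$ Cohen subsets and $\PP_2$, being ccc of size $\nu < \theta$, does not increase $2^\kappa$ past $\theta$ (and the commuting argument shows $2^\kappa \geq \theta$ is preserved); and $\non(\M_\kappa) = \nu$ because the non-meager set just constructed has size $\nu$ (so $\non(\M_\kappa) \leq \nu$), while $\bb_\kappa \leq \non(\M_\kappa)$ in ZFC and one checks $\bb_\kappa \geq \nu$ in $W[H]$ — or alternatively that $2^{<\kappa} = \mu \leq \non(\M_\kappa)$ gives the lower bound $\mu$ and one argues directly that no set of size $<\nu$ is non-meager, e.g.\ because every set of reals of size $<\nu$ appears in an intermediate model $V^{\PP_0}[H][G_1']$ for a sub-poset $G_1'$ of $\PP_1$ over which the remaining Cohen subsets of $\kappa$ are still Cohen-generic, hence the set is meager. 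I would need $\mu < \nu$ strictly, so the lower bound on $\non(\M_\kappa)$ must genuinely be $\nu$, not just $\mu$; this is where the "every small set lives in an intermediate model, and Cohen forcing over it makes it meager" argument is essential.

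\textbf{Main obstacle.} The delicate point is simultaneously controlling $2^{<\kappa}$ and $2^\kappa$ while making the non-meagerness witness have size exactly $\nu$ strictly between them. In Theorem~\ref{1stcon} the witness $2^\kappa \cap V[H]$ automatically had size $(2^\kappa)^{V[H]} = \kappa^+ = 2^{<\kappa}$, so $\non(\M_\kappa) = 2^{<\kappa}$ fell out for free; to get strict inequality I must instead use a witness of the form $2^\kappa \cap (\text{intermediate model after the Cohen reals but before some of the Cohen subsets of }\kappa)$, whose size is $\nu$, and I must verify that this set is still non-meager after the \emph{remaining} Cohen subsets of $\kappa$ are added — which is again an instance of the Theorem~\ref{1stcon} argument but now with the roles of "intermediate" and "final" forcing sliced more finely inside $\PP_1$. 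Getting the book-keeping, the chain $C_\alpha \subseteq \theta$, and the commutativity of the three forcings to all line up — in particular ensuring $(2^\kappa)$ in the relevant intermediate model is exactly $\nu$, which forces a constraint like $\nu^{<\kappa} = \nu$ and a careful choice of where the Cohen reals are inserted — is the technical heart, and I expect the bulk of the proof to be spent making the lower bound $\non(\M_\kappa) \geq \nu$ airtight via the "small sets are meager because they are captured before enough Cohen subsets of $\kappa$ are added" argument, dualized appropriately against the nowhere-dense-set coding function $\dot f$.
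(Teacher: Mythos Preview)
Your proposal has a genuine gap that prevents it from producing the desired model. You take $\PP_2 = \Add(\omega,\nu)$ with $\nu > \mu$ and then claim ``$2^{<\kappa} = \mu$ survives because $\PP_1$ and $\PP_2$ add no new $<\kappa$-sequences.'' But $\Add(\omega,\nu)$ adds $\nu$ reals, i.e.\ $\nu$ new $\omega$-sequences, so it certainly adds $<\kappa$-sequences; in the final model $2^\omega = \nu$ and hence $2^{<\kappa} \geq \nu$. Thus your construction gives $2^{<\kappa} = \nu = \non(\M_\kappa)$ at best, not the strict inequality $2^{<\kappa} < \non(\M_\kappa)$ you are after. (Relatedly, your description of $\PP_0$ as ``$<\kappa$-closed \dots\ making $2^{<\kappa} = \mu > \kappa$'' is internally contradictory: a $<\kappa$-closed forcing adds no new elements of $2^{<\kappa}$.) The deeper point is that the Theorem~\ref{1stcon} technique inherently ties the size of the non-meager witness $2^\kappa \cap V[H]$ to the number of Cohen reals added, and that number is exactly $2^{<\kappa}$ in the final model; so a direct rerun cannot separate the two cardinals. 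Your book-keeping step also breaks: once $2^{<\kappa} > \kappa$ in the model where you run the recursion, $|(\CC_\kappa^{C})| = 2^{<\kappa} > \kappa$ for $|C| = \kappa$, and the length-$\kappa$ enumeration of conditions is no longer possible.

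The paper takes a genuinely different route. It first raises $\bb_\kappa$ to $\kappa^{++}$ via a $\kappa^{++}$-stage iteration of $\kappa$-Hechler forcing (over a model with $\kappa^{+++}$ Cohen-$\kappa$ subsets), and only then adds $\kappa^+$ Cohen reals to make $2^{<\kappa} = \kappa^+$. The lower bound $\non(\M_\kappa) \geq \kappa^{++}$ then comes for free from $\bb_\kappa \leq \non(\M_\kappa)$ and the fact that Cohen-real forcing is $\kappa^\kappa$-bounding. The real work is the \emph{upper} bound: the paper does not use an intermediate-model witness at all, but instead codes the $\kappa$-Cohen functions $c_\alpha$ (extracted from the Hechler generics) into elements $c_\alpha^\gamma \in 2^\kappa$ via a bijection $f : \kappa^+ \to 2^{<\kappa}$ living in $V[H]$, and a closure argument produces ``$h$-good'' $\gamma < \kappa^+$ for which $c_\alpha^\gamma$ avoids any given meager set. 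This yields a non-meager set of size $\kappa^{++}$ while only $\kappa^+$ Cohen reals have been added --- precisely the decoupling your approach lacks.
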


\begin{proof}
Again assume GCH. First add $\kappa^{+++}$ Cohen subsets of $\kappa$. Then perform a $\kappa^{++}$-stage iteration of $\kappa$-Hechler
forcing (see~\cite[Subsection 4.2]{BBFM18}). In the resulting model $V[G_0][G_1]$, $2^{< \kappa} = \kappa < \bb_\kappa = \non (\M_\kappa) =
\kappa^{++} < 2^\kappa = \kappa^{+++}$. Next add $\kappa^+$ Cohen reals to make $2^{< \kappa} = \kappa^+$ in the final model
$V[G_0][G_1][H]$. Again it is clear this model will satisfy $\bb_\kappa = \kappa^{++}$, so that $\non (\M_\kappa) \geq \kappa^{++}$,
and it suffices to show $\non (\M_\kappa) \leq \kappa^{++}$. In $V[H]$ the remainder forcing is $< \kappa$-distributive and
$\kappa^+$-cc. 

It is well-known (see e.g.~\cite[Subsection 4.2]{BBFM18}) that $\kappa$-Hechler forcing also adjoins a $\kappa$-Cohen function.
In $V[G_0][G_1]$ let $( c_\alpha \in \kappa^\kappa : \alpha < \kappa^{++} )$ be the Cohen functions decoded from the $\kappa^{++}$
many Hechler functions added in the iteration. We will use the $c_\alpha$ to code $c_\alpha^\gamma \in 2^\kappa$, $\kappa\leq\gamma < \kappa^+$,
in such a way that the set $C = \{ c_\alpha^\gamma : \alpha < \kappa^{++} , \kappa \leq \gamma < \kappa^+ \}$ is non-meager in $V[G_0][G_1][H]$.

More explicitly, let $f : \kappa^+ \to 2^{< \kappa}$ be a bijection with $f \in V[H]$. By distributivity of the remainder forcing,
$f$ is still a bijection in $V[G_0][G_1][H]$. Assume $f(0) = \la\ra$. Next, for each $\gamma$ with $\kappa \leq \gamma < \kappa^+$, let 
$g_\gamma : \kappa \to \gamma$ be a bijection with $g_\gamma \in V$ and $g_\gamma(0) = 0$. Define $c_\alpha^\gamma \in V[G_0][G_1][H]$
to be the concatenation of the $f ( g_\gamma (c_\alpha (\zeta)))$ where $\zeta < \kappa$, i.e.
\[ c_\alpha^\gamma = f ( g_\gamma (c_\alpha (0))) \ha f ( g_\gamma (c_\alpha (1))) \ha f ( g_\gamma (c_\alpha (2))) \ha ... \ha 
f ( g_\gamma (c_\alpha (\zeta))) \ha ... \]
Note that if $c_\alpha (\zeta) \neq 0$ then $f(g_\gamma(c_\alpha (\zeta))) \neq \la\ra$ so that by genericity $\kappa$ many of the
$f ( g_\gamma (c_\alpha (\zeta)))$ are non-empty sequences and $c_\alpha^\gamma$ is indeed an element of $2^\kappa$.

To see that $C$ is non-meager, let $A$ be a meager set in $V[G_0][G_1][H]$, say $A = \bigcup_{\zeta < \kappa} A_\zeta$ where 
the $A_\zeta$ form an increasing sequence of nowhere dense sets.  Thus there is $h : 2^{< \kappa} \to 2^{< \kappa}$ such that
for all $\zeta < \kappa$ and all $\sigma \in 2^\zeta$, $[\sigma \ha h (\sigma) ] \cap A_\zeta = \emptyset$. By the $\kappa^{+}$-cc
of the remainder forcing, there is $\alpha < \kappa^{++}$ such that $h \in V[H] [G_0] [G_1^\alpha]$, where $G_1^\alpha$ is the generic
for the $\alpha$ first stages of the iteration of $\kappa$-Hechler forcing. That is, $c_\alpha$ is still $\CC_\kappa^V$-generic over 
$V_\alpha : = V[H] [G_0] [G_1^\alpha]$. Work in $V_\alpha$. We shall show that for some $\gamma < \kappa^+$, $\dot c_\alpha^\gamma$
is forced to be outside $A$.

Let $\gamma < \kappa^+$ with $\gamma \geq \kappa$. For $\tau \in \gamma^{< \kappa}$ let $\bar f (\tau)$ be the concatenation
of the $f (\tau (\zeta))$, $\zeta < \lh (\tau)$, i.e.
\[ \bar f (\tau) = f (\tau (0)) \ha f (\tau (1)) \ha f (\tau (2)) \ha ... \ha f (\tau (\zeta)) \ha ... \]
Say that $\gamma$ is {\em $h$-good} if
\begin{itemize}
\item $\delta < \gamma$ implies that $f^{-1} (h ( f (\delta))) < \gamma$,
\item $\tau \in ( \gamma^{< \kappa} \cap V )$ implies that $f^{-1} (\bar f (\tau)) < \gamma$.
\end{itemize}

\begin{claim}
There are $h$-good $\gamma$.
\end{claim}

\begin{proof}
This is a standard closure argument. Let $\gamma_0$ be arbitrary with $\kappa \leq \gamma_0 < \kappa^+$.
Recursively construct an increasing continuous sequence $(\gamma_\zeta : \zeta \leq \kappa)$ of ordinals
below $\kappa^+$ such that 
\begin{itemize}
\item if $\delta < \gamma_\zeta$ then $f^{-1} (h ( f (\delta))) < \gamma_{\zeta + 1}$,
\item if $\tau \in ( \gamma_\zeta^{< \kappa} \cap V )$ then $f^{-1} (\bar f (\tau)) < \gamma_{\zeta + 1}$.
\end{itemize}
Since $ | \gamma_\zeta^{< \kappa} \cap V | = \kappa$, this is possible. Clearly $\gamma_\kappa$ is $h$-good.
\end{proof}

\begin{claim}
If $\gamma$ is $h$-good, then $\dot c_\alpha^\gamma$ is forced to be outside $A$.
\end{claim}

\begin{proof}
Let $\zeta < \kappa$ and let $\upsilon \in \CC_\kappa^V = \kappa^{<\kappa} \cap V$ be a $\kappa$-Cohen condition.
We need to find $\upsilon ' \leq \upsilon $ such that $\upsilon ' \forces \dot c_\alpha^\gamma \notin A_\zeta$.
Let $\tau \in \gamma^{< \kappa} \cap V$ be the image of $\upsilon$ under $g_\gamma$, that is, 
$\lh (\tau) = \lh (\upsilon)$ and $\tau (\zeta) = g_\gamma (\upsilon (\zeta))$ for all $\zeta < \lh (\upsilon)$.
Let $\sigma : = \bar f (\tau)$. Without loss of generality we may assume that $\eta : = \lh (\sigma) \geq \zeta$;
otherwise extend the condition $\upsilon$. Clearly $\upsilon \forces  \dot c_\alpha^\gamma \in [ \sigma ]$.
Since $\gamma$ is $h$-good, we know that
$\delta : = f^{-1} (\sigma) < \gamma$. Note $f (\delta) = \sigma$. We also have that $\epsilon : = f^{-1} (h (\sigma)) < \gamma$.
Note $f(\epsilon) = h (\sigma)$. By definition of $h$, we obtain $[\sigma \ha h(\sigma) ] \cap A_\eta = \emptyset$
and therefore also $[\sigma \ha h(\sigma) ] \cap A_\zeta = \emptyset$. 
Let $\tau ' = \tau \ha \epsilon$ and $\upsilon' = \upsilon \ha g_\gamma^{-1} (\epsilon)$. 
Thus $\bar f (\tau') = \bar f (\tau) \ha f (\epsilon) = \sigma \ha h(\sigma)$ and $\upsilon ' \forces  \dot c_\alpha^\gamma \in [ \sigma\ha h (\sigma) ]$.
In particular $\upsilon ' \forces \dot c_\alpha^\gamma \notin A_\zeta$.
\end{proof}

This completes the proof of the theorem.
\end{proof}

\bigskip

\noindent {\sc Models for $\cof (\M_\kappa)$.} As before let $\lambda = |2^{< \kappa}|$. By the results of Section~\ref{ZFC} we know in
particular that $\lambda < \cof (\M_\kappa) \leq 2^\lambda$. We are interested in models with $\kappa < \lambda$. 

If we add 
$\kappa^+$ Cohen reals over a model of GCH, we obtain a model of $\kappa < \lambda = 2^\kappa = \kappa^+ < 
\cof(\M_\kappa) = 2^\lambda = \kappa^{++}$.

For the consistency of $\kappa < \lambda < \cof (\M_\kappa) = 2^\kappa$, use the model of Theorem~\ref{1stcon}. 

To obtain a model of $\kappa <  \lambda  < 2^\kappa < \cof (\M_\kappa)$, first add $\kappa^{+++}$ Cohen subsets of $\kappa^+$, then
$\kappa^{++}$ Cohen subsets of $\kappa$, and finally $\kappa^+$ Cohen reals over a model for GCH. For $\lambda = \kappa^+$, $\dd_\lambda = \kappa^{+++}$
in the first extension, and this is preserved. Therefore the final model satisfies $\dd_\lambda = \cof (\M_\kappa) = 2^\lambda = \kappa^{+++}$,
by Corollary~\ref{thirdcor}.

\begin{ques}  \label{cof-ques}
Is $\kappa < 2^{<\kappa}$ together with $\cof (\M_\kappa) < 2^{2^{< \kappa}}$ consistent?
\end{ques}

In view of the results Section~\ref{ZFC}, this is related to questions about the dominating numbers in the next section.


\section{Dominating numbers}
\label{dominating}

For this section, let $\kappa$ be regular (not necessarily uncountable) and $\lambda \geq \kappa$ arbitrary. 
Let us first see that in the definition of $\dd^\lambda_\kappa$ it does not matter whether we use everywhere domination or domination modulo
$< \kappa$ or $< \lambda$ (in case $\cf(\lambda)\geq\kappa$). 
For $f,g \in \kappa^\lambda$, say $f \leq_{\lambda } g$ if there is $\delta < \lambda$ such that for all $\gamma \geq \delta$,
$f(\gamma) \leq g(\gamma)$. 
Let $\dd^\lambda_\kappa (\leq)$ be the dominating number of $\kappa^\lambda$ with the everywhere domination ordering,
and let $\dd_\kappa^\lambda (\leq_\lambda)$ be the dominating number of $\kappa^\lambda$ with the ordering $\leq_\lambda$.

\begin{prop}   \label{dom-prop}
$\dd_\kappa^\lambda (\leq) = \dd_\kappa^\lambda (\leq_\lambda)$. In particular, if $\cf (\lambda) \geq \kappa$, 
$\dd_\kappa^\lambda = \dd_\kappa^\lambda (\leq) = \dd_\kappa^\lambda (\leq_\lambda)$.
\end{prop}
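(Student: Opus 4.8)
The plan is to prove $\dd_\kappa^\lambda(\leq) = \dd_\kappa^\lambda(\leq_\lambda)$ by establishing the two inequalities, and then to derive the ``in particular'' clause from the observation that $\leq_\lambda$ and $\leq^*$ (domination modulo $<\kappa$) coincide when $\cf(\lambda) \geq \kappa$. One inequality is immediate: every everywhere-dominating family is $\leq_\lambda$-dominating, since $f \leq g$ everywhere implies $f \leq_\lambda g$; hence $\dd_\kappa^\lambda(\leq_\lambda) \leq \dd_\kappa^\lambda(\leq)$.

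For the reverse inequality $\dd_\kappa^\lambda(\leq) \leq \dd_\kappa^\lambda(\leq_\lambda)$, I would start with a $\leq_\lambda$-dominating family $F \subseteq \kappa^\lambda$ of size $\dd_\kappa^\lambda(\leq_\lambda)$ and manufacture from it an everywhere-dominating family of the same size. The natural device is to close $F$ under finite (indeed, $<\lambda$-many) ``patches'': for each $g \in F$ and each $\delta < \lambda$, let $g_\delta$ agree with $g$ on $[\delta,\lambda)$ and be, say, $0$ (or any fixed value) below $\delta$ — but that is the wrong direction. Instead, to absorb the exceptional set of size $<\lambda$ below some $\delta$, I would take the family $F' = \{\, \max(g,h) : g \in F,\ h \in \kappa^{<\lambda} \text{ extended by } 0 \,\}$; more precisely, for $g \in F$, $\delta < \lambda$, and a function $h \colon \delta \to \kappa$, define $g_{\delta,h} \in \kappa^\lambda$ by $g_{\delta,h}(\gamma) = h(\gamma)$ for $\gamma < \delta$ and $g_{\delta,h}(\gamma) = g(\gamma)$ for $\gamma \geq \delta$. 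Then given any $t \in \kappa^\lambda$, pick $g \in F$ with $t \leq_\lambda g$, witnessed by some $\delta$; setting $h = t\restriction\delta$ gives $t \leq g_{\delta,h}$ everywhere. The size of $F'$ is $|F| \cdot \sum_{\delta<\lambda} \kappa^{|\delta|} = |F| \cdot \lambda^{<\lambda}$ in general, which need not equal $|F|$. To avoid this blow-up one should use that $\dd_\kappa^\lambda(\leq_\lambda) \geq \lambda$ is false in general but $\dd_\kappa^\lambda(\leq_\lambda) \geq \cf(\lambda)$ holds and, more usefully, that we may restrict the patching functions $h$ to a fixed cofinal-below-$F$ dominating-on-initial-segments family; alternatively, and more cleanly, observe that it suffices to patch with the constant-$0$ function below $\delta$ after first replacing $F$ by $\{\max(g,g') : g,g' \in F\}$ repeatedly — but the cleanest route is simply to note that the patched family has size $|F| + \lambda^{<\lambda}$ and that one can reduce the $\lambda^{<\lambda}$ term: indeed any $t$ is dominated everywhere by $g_{\delta, t\restriction\delta}$, and the collection $\{t\restriction\delta : t \in \kappa^\lambda,\ \delta<\lambda\}$ has size $\lambda^{<\lambda}$, so in the worst case we only get $\dd_\kappa^\lambda(\leq) \leq \dd_\kappa^\lambda(\leq_\lambda) + \lambda^{<\lambda}$. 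Since the companion inequality $\dd_\kappa^\lambda(\leq_\lambda) \geq \lambda$ can be shown directly (no $\leq_\lambda$-dominating family of size $<\lambda$ exists because for any such family one can diagonalize using that $\leq_\lambda$ only ignores a single initial segment — wait, that needs $\cf(\lambda)\geq\kappa$), I would in fact prove the unconditional statement by a different, more hands-on patching that keeps the size controlled, e.g.\ by working with a scale-like enumeration. The honest main obstacle is precisely this bookkeeping: showing that the everywhere-dominating family built from $F$ has size exactly $|F|$ and not $|F| + \lambda^{<\lambda}$.

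For the ``in particular'' clause, assume $\cf(\lambda) \geq \kappa$. Then for $f,g \in \kappa^\lambda$ the relation $f \leq^* g$ (the set $\{\gamma : f(\gamma) > g(\gamma)\}$ has size $<\kappa$) implies $f \leq_\lambda g$: a subset of $\lambda$ of size $<\kappa \leq \cf(\lambda)$ is bounded in $\lambda$, so there is $\delta < \lambda$ above it, and beyond $\delta$ we have $f \leq g$. Conversely $f \leq_\lambda g$ trivially implies $f \leq^* g$ when $\kappa \leq \lambda$ is regular and $\cf(\lambda)\geq\kappa$, since an initial segment $\delta < \lambda$ is a set we are allowed to ignore only if $|\delta| < \kappa$ — this does \emph{not} hold for all $\delta$, so in fact $\leq_\lambda$ is \emph{stronger} than $\leq^*$ in general and the two orderings need not literally coincide; what is true is $\leq_\lambda\ \subseteq\ \leq^*$, giving $\dd_\kappa^\lambda(\leq^*) \leq \dd_\kappa^\lambda(\leq_\lambda)$, and combined with the evident $\dd_\kappa^\lambda(\leq_\lambda) \leq \dd_\kappa^\lambda(\leq)$ and the main inequality $\dd_\kappa^\lambda(\leq) \leq \dd_\kappa^\lambda(\leq_\lambda)$ we get that all of $\dd_\kappa^\lambda(\leq)$, $\dd_\kappa^\lambda(\leq_\lambda)$, $\dd_\kappa^\lambda(\leq^*) = \dd_\kappa^\lambda$ are squeezed equal, which is the claim. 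Thus the structure is: (1) trivial inequality $\dd_\kappa^\lambda(\leq_\lambda) \leq \dd_\kappa^\lambda(\leq)$; (2) the patching argument for $\dd_\kappa^\lambda(\leq) \leq \dd_\kappa^\lambda(\leq_\lambda)$, which is the heart of the matter; (3) under $\cf(\lambda)\geq\kappa$, the containment $\leq_\lambda\ \subseteq\ \leq^*$ plus the trivial $\leq\ \subseteq\ \leq_\lambda$ to sandwich $\dd_\kappa^\lambda$ between the two already-equal cardinals.
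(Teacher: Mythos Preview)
Your overall structure is right, and you correctly isolate the heart of the matter as the inequality $\dd_\kappa^\lambda(\leq) \leq \dd_\kappa^\lambda(\leq_\lambda)$. However, you explicitly leave a genuine gap there: your patching construction produces a family of size $|\F| \cdot \lambda^{<\lambda}$, you recognise this may exceed $|\F|$, and you do not supply a fix. The paper's proof resolves exactly this obstacle by a trick you are missing: instead of patching with arbitrary elements of $\kappa^{<\lambda}$, it uses \emph{shifts of members of the dominating family itself} to cover the initial segment. Concretely, given $h \in \kappa^\lambda$, first pick $f \in \F$ and $\beta < \lambda$ with $f(\gamma) \geq h(\gamma)$ for $\gamma \geq \beta$; now tile $\lambda$ by copies of $[0,\beta)$ and define $h'$ so that on each block it reproduces $h \re \beta$. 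A second application of $\leq_\lambda$-domination gives $g \in \F$ dominating $h'$ from some point on, hence dominating one full block, and the shift of $g$ corresponding to that block dominates $h \re \beta$. The resulting family $\{ h_{f,g,\alpha,\beta} : f,g \in \F,\ \alpha,\beta < \lambda \}$ has size only $|\F| \cdot \lambda = |\F|$, since $\dd_\kappa^\lambda(\leq_\lambda) > \lambda$.

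A secondary issue: in your treatment of the ``in particular'' clause you end up asserting $\leq_\lambda \,\subseteq\, \leq^*$, which is the wrong direction. What is true under $\cf(\lambda) \geq \kappa$ is $\leq \,\subseteq\, \leq^* \,\subseteq\, \leq_\lambda$ (a set of size $<\kappa$ is bounded below $\lambda$), giving $\dd_\kappa^\lambda(\leq_\lambda) \leq \dd_\kappa^\lambda \leq \dd_\kappa^\lambda(\leq)$; together with the main equality this squeezes all three. This is exactly the one-line deduction in the paper, and once you correct the containment your sandwich argument works.
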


\begin{proof}
The second statement follows from the first because $\cf (\lambda) \geq \kappa$ obviously implies  $\dd_\kappa^\lambda (\leq_\lambda)
\leq \dd_\kappa^\lambda \leq \dd_\kappa^\lambda (\leq)$. So it suffices to see that $\dd_\kappa^\lambda (\leq) \leq \dd_\kappa^\lambda (\leq_\lambda)$.

Take $\F \sub \kappa^\lambda$ dominating in $(\kappa^\lambda, \leq_\lambda)$. For $f,g \in \F$ and $\alpha,\beta < \lambda$ define the
function $h_{f,g,\alpha,\beta} \in \kappa^\lambda$ by
\[ h_{f,g,\alpha,\beta} (\gamma) = \left\{ \begin{array}{ll} g(\alpha + \gamma) & \mbox{if } \gamma < \beta \\
f(\gamma) & \mbox{if } \gamma \geq \beta. \\ \end{array} \right. \]
Since $| \{ h_{f,g,\alpha,\beta} : f,g \in \F , \alpha,\beta < \lambda \} | = | \F|$, it suffices to show that this family is dominating
everywhere. To this end let $h \in \kappa^\lambda$ be arbitrary. There are $f \in \F$ and $\beta < \lambda$ such that
$f(\gamma) \geq h(\gamma)$ for all $\gamma \geq \beta$. Now partition $\lambda$ into intervals $I_\zeta$, $\zeta < \lambda$, so
that each $I_\zeta$ has length exactly $\beta$. Let $i_\zeta = \min I_\zeta$ for all $\zeta$. Define $h' \in \kappa^\lambda$ by
\[ h' (i_\zeta + \xi ) = h (\xi) \]
for all $\zeta < \lambda$ and all $\xi < \beta$. There are $g \in \F$ and $\eta < \lambda$ such that $g(\delta) \geq h'(\delta)$ for all $\delta \geq i_\eta$.
Then, for $\xi < \beta$,
\[ h_{f,g,i_\eta,\beta} (\xi) = g(i_\eta + \xi) \geq h'(i_\eta + \xi) = h (\xi), \]
and we see that $h_{f,g,i_\eta,\beta}$ dominates $h$ everywhere.
\end{proof}

Since $\cf (2^{< \kappa}) \geq \kappa$ the second statement is true when $\lambda = |2^{< \kappa}|$ as in Section~\ref{ZFC}.

\begin{thm}   \label{dom-thm}
Let $\kappa$ be regular and $\lambda > \kappa$. Then $\dd^\lambda_\kappa \geq \dd^\lambda_{\kappa^+}$.
\end{thm}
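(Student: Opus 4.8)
My plan is to transform an arbitrary family dominating $(\kappa^\lambda,\leq^*)$ modulo $<\kappa$ into a family of the same cardinality dominating $((\kappa^+)^\lambda,\leq^*)$ modulo $<\kappa^+$ (i.e.\ modulo $\leq\kappa$), by a ``rank function'' trick. First, $\dd^\lambda_\mu$ depends only on the cardinality of the index set (transporting along a bijection of index sets preserves the size of every relevant bad set), and since $\lambda>\kappa$ we have $|\lambda\times\kappa|=\lambda$. So fix a family $\F\subseteq\kappa^{\lambda\times\kappa}$ dominating modulo $<\kappa$ with $|\F|=\dd^\lambda_\kappa$, and view each $f\in\F$ as a sequence $\langle f_\gamma:\gamma<\lambda\rangle$ with $f_\gamma\in\kappa^\kappa$. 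The output family will be $\G=\{h_f:f\in\F\}$, where $h_f\in(\kappa^+)^\lambda$ is defined by $h_f(\gamma)=\rho(f_\gamma)$ for a single fixed function $\rho:\kappa^\kappa\to\kappa^+$ to be constructed.

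I want $\rho$ to have two properties: (a) it is monotone for everywhere-domination, i.e.\ $v\leq w$ pointwise implies $\rho(v)\leq\rho(w)$; and (b) there are $u_\delta\in\kappa^\kappa$ ($\delta<\kappa^+$) with $\rho(u_\delta)\geq\delta$. Granting these, the verification is immediate: given $g\in(\kappa^+)^\lambda$, let $f^*\in\kappa^{\lambda\times\kappa}$ be defined by $f^*_\gamma:=u_{g(\gamma)}$, and pick $f\in\F$ with $f\geq^* f^*$ modulo $<\kappa$. The bad set of pairs $(\gamma,\xi)$ has size $<\kappa$, so all but $<\kappa$ many $\gamma$ satisfy $f_\gamma\geq u_{g(\gamma)}$ everywhere; for such $\gamma$, monotonicity yields $h_f(\gamma)=\rho(f_\gamma)\geq\rho(u_{g(\gamma)})\geq g(\gamma)$. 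Hence $\{\gamma:g(\gamma)>h_f(\gamma)\}$ has size $<\kappa<\kappa^+$, so $h_f$ dominates $g$ in the relevant sense, and $\dd^\lambda_{\kappa^+}\leq|\G|=\dd^\lambda_\kappa$ follows.

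The natural route to $\rho$ and the $u_\delta$ is to produce a ``box-thin'' family $\{u_\delta:\delta<\kappa^+\}\subseteq\kappa^\kappa$, meaning that for every $v\in\kappa^\kappa$ the set $\{\delta:u_\delta\leq v\ \text{everywhere}\}$ is bounded in $\kappa^+$; then $\rho(v):=\sup\{\delta+1:u_\delta\leq v\ \text{everywhere}\}$ is well defined into $\kappa^+$ by box-thinness, is monotone by construction, and satisfies $\rho(u_\delta)\geq\delta+1$. This works cleanly when $\bb_\kappa=\kappa^+$: take $\langle u_\delta:\delta<\kappa^+\rangle$ to be a $\leq^*$-increasing, $\leq^*$-unbounded sequence (which exists precisely in that case); then $\{\delta:u_\delta\leq v\ \text{everywhere}\}\subseteq\{\delta:u_\delta\leq^* v\}$, and the latter is a bounded initial segment of $\kappa^+$ since the sequence is $\leq^*$-increasing and unbounded.

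The main obstacle is the construction of a box-thin $\kappa^+$-family (equivalently, of the monotone $\rho$) in general. When $2^\kappa=\kappa^+$ one can enumerate the ``boxes'' $\prod_{\xi<\kappa}(v(\xi)+1)$ as $\langle B_\delta:\delta<\kappa^+\rangle$ and recursively choose $u_\delta\notin\bigcup_{\eta<\delta}B_\eta$, so that each $B_\eta$ contains at most $|\eta|+1\leq\kappa$ of the $u_\delta$; but when $2^\kappa$ (or $\bb_\kappa$) is large this enumeration is unavailable, and the family is then $\leq^*$-bounded, which is exactly the awkward case. Here I expect one must either enlarge the auxiliary index set --- working with $\kappa^\mu$ for a suitable $\mu>\kappa$, where boxes are easier to escape --- or run a more delicate transfinite bookkeeping that uses the $\kappa$ available coordinates to diagonalise against all relevant $v$ simultaneously; this is the step that will require real work, and possibly a different idea than the rank-function reduction. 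Finally, by Proposition~\ref{dom-prop} it is harmless to phrase ``domination modulo $<\kappa^+$'' in whichever equivalent form (everywhere, modulo $<\kappa^+$, or modulo $<\lambda$) is most convenient.
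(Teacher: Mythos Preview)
Your instinct that the $\mu=\kappa$ version cannot be made to work is correct, and the obstruction is sharp: whenever $\kappa^{<\kappa}=\kappa$ and $\bb_\kappa>\kappa^+$ (e.g.\ after a long $\kappa$-Hechler iteration over a model of GCH), no box-thin family of size $\kappa^+$ exists in $\kappa^\kappa$, so no monotone unbounded $\rho:\kappa^\kappa\to\kappa^+$ exists. Indeed, given any $\{u_\delta:\delta<\kappa^+\}\subseteq\kappa^\kappa$, pick $v$ with $u_\delta\leq^* v$ for all $\delta$; two applications of pigeonhole (first on the cutoff $\alpha_\delta<\kappa$ beyond which $u_\delta\leq v$, then on the initial segment $u_\delta\re\alpha\in\kappa^\alpha$, using $|\kappa^\alpha|\leq\kappa^{<\kappa}=\kappa$) yield $\kappa^+$ many $\delta$ with the same cutoff and the same initial segment, hence all pointwise below a single modification of $v$. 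So the rank-function route through $\kappa^\kappa$ is provably blocked.

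Your suggested fix of enlarging the auxiliary index does work, and with $\mu=\kappa^+$ the missing construction is short: for $\delta<\kappa^+$ fix an injection $j_\delta:\delta\to\kappa$ and set $u_\delta(\xi)=j_\delta(\xi)$ for $\xi<\delta$, $u_\delta(\xi)=0$ otherwise. Given $v\in\kappa^{\kappa^+}$, some value $\zeta<\kappa$ is taken $\kappa^+$ times by $v$; if $u_\delta\leq v$ then $j_\delta$ maps $\{\xi<\delta:v(\xi)=\zeta\}$ injectively into $\zeta+1$, so this set has size $<\kappa$ and $\delta$ must lie below the $\kappa$-th point where $v$ takes value $\zeta$. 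Thus the family is box-thin, and your reduction goes through verbatim with $\lambda\times\kappa^+$ in place of $\lambda\times\kappa$ (note $|\lambda\times\kappa^+|=\lambda$ since $\lambda>\kappa$). The paper's proof is a more direct packaging of the same injectivity-versus-repetition pigeonhole: rather than factoring through a rank function on a product, it maps $f\in\kappa^\lambda$ to $\Phi_-(f)\in(\kappa^+)^\lambda$ where $\Phi_-(f)(\alpha)$ is the least $\gamma$ such that $f$ repeats some value $\kappa$ times on $[\alpha,\alpha+\gamma)$, and maps $h\in(\kappa^+)^\lambda$ to a function $\Phi_+(h)\in\kappa^\lambda$ that is injective on successive intervals of $\lambda$ whose lengths are prescribed by $h$; a single $\alpha$ with $h(\alpha)>\Phi_-(f)(\alpha)$ then already forces $\kappa$ many places where $\Phi_+(h)$ exceeds $f$.
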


\begin{proof}
We do the proof using Tukey connections. First define $\Phi_- (f) : \kappa^\lambda \to (\kappa^+)^\lambda$ by
\[ \Phi_- (f) (\alpha) = \min \{ \gamma < \kappa^+ : \exists \zeta < \kappa \; \exists^\kappa \beta \in [\alpha , \alpha + \gamma ) \mbox{ such that } f (\beta) = \zeta \} \]
for $f \in \kappa^\lambda$ and $\alpha < \lambda$. Here $\exists^\kappa \beta$ denotes the quantifier ``there are $\kappa$ many $\beta$". 
This is clearly well-defined by the pigeonhole principle.

For defining $\Phi_+ : (\kappa^+)^\lambda \to \kappa^\lambda$, fix $h \in ( \kappa^+ )^\lambda$. Assume without loss that $h (\alpha) > 0$
for all $\alpha$. Let $(\alpha_\xi : \xi < \lambda)$ be the strictly increasing club
sequence in $\lambda$ recursively defined by $\alpha_0 = 0$, $\alpha_{\xi + 1} = \alpha_\xi + h (\alpha_\xi)$, and $\alpha_\eta =
\bigcup_{\xi < \eta} \alpha_\xi$ for limit ordinals $\eta$. Notice that for $\eta < \lambda$ we always have $\alpha_\eta < \lambda$ (even for
singular $\lambda$) because the sequence increases only by ordinals of size at most $\kappa$. Replacing $h$ recursively by a larger
function, if necessary, we may also assume that for every $\xi < \lambda$ and every $\alpha \in [\alpha_\xi , \alpha_{\xi + 1})$,
$h (\alpha_{\xi + 1}) \geq h (\alpha)$. Now define $\Phi_+ (h)$ such that for every $\xi < \lambda$, $\Phi_+ (h) \re [\alpha_\xi , \alpha_{\xi + 1})$
is a one-to-one function into $\kappa$. 

We first claim that if $ h \not\leq^* \Phi_- (f)$ then $\Phi_+ (h) \not\leq^* f$.

To see this let $\alpha < \lambda$ be such that $\Phi_- (f) (\alpha) < h (\alpha)$. There are at least $\kappa^+$ many such $\alpha$.
Let $\xi < \lambda$ be such that $\alpha \in [\alpha_\xi , \alpha_{\xi + 1})$. By choice of $h$ we have $\Phi_- (f) (\alpha) < h(\alpha) 
\leq h(\alpha_{\xi + 1} )$ so that also \[\alpha + \Phi_- (f) (\alpha) < \alpha + h(\alpha)  \leq \alpha_{\xi + 1} + h(\alpha_{\xi + 1} ) =
\alpha_{\xi + 2}. \]
In particular there is $\zeta < \kappa$ such that $f$ assumes value $\zeta$ exactly $\kappa$ many times in the interval $[\alpha_\xi , \alpha_{\xi + 2} )$.
Since $\Phi_+ (h)$ is one-to-one on both intervals $[\alpha_\xi , \alpha_{\xi + 1} )$ and $[\alpha_{\xi+1} , \alpha_{\xi + 2} )$, it follows that there
are $\kappa$ many places where $\Phi_+ (h)$ is above $f$. Thus $\Phi_+ (h) \not\leq^* f$.

As a consequence, it now readily follows that if $\F \sub \kappa^\lambda$ is dominating then so is 
$\{ \Phi_- (f) : f \in \F \}$ in $(\kappa^+)^\lambda$.
\end{proof}

\begin{ques}   \label{dom-order-question}
Let $\kappa$ and $\mu$ be regular and $\kappa < \mu \leq \lambda$. Does $\dd^\lambda_\kappa \geq \dd^\lambda_\mu$ hold?
In particular, is $\dd^\lambda_\kappa \geq \dd_\lambda$ for regular $\lambda$?
\end{ques}

The inequality in Theorem~\ref{dom-thm} is consistently strict. 

\begin{obs}
Assume GCH.
Let $\kappa_0 < \kappa_1 < ... < \kappa_n$ be regular cardinals, let $\lambda \geq \kappa_n$, and let $\mu_n < ... < \mu_1 < \mu_0$
be cardinals with $\mu_n > \lambda$ and $\cf (\mu_i) > \lambda$.
Then there is a forcing extension with $\dd_{\kappa_i}^\lambda = \mu_i$.
\end{obs}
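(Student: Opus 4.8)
The plan is to build the required model as a finite-support (or rather $<\kappa_0$-support, since we are adding Cohen subsets of various regulars) product of Cohen forcings of different types, one block of forcing for each index $i \le n$, arranged so that the block designed to fix $\dd_{\kappa_i}^\lambda = \mu_i$ does not interfere with the values set by the other blocks. Concretely, I would start from a model of GCH, and force with a product $\prod_{i=0}^n \CC_{\kappa_i}^{\mu_i}$, where $\CC_{\kappa_i}^{\mu_i}$ adds $\mu_i$ many Cohen subsets of $\kappa_i$ with $<\kappa_i$-support. Since $\mu_n < \cdots < \mu_0$ and the $\kappa_i$ increase, the natural guess is that the $i$-th block is the one that makes $2^{<\kappa_i}$ small enough and simultaneously provides a scale of length $\mu_i$ witnessing $\dd_{\kappa_i}^\lambda \le \mu_i$, while the earlier blocks (adding Cohen subsets of larger regulars $\kappa_j$, $j<i$) are $<\kappa_i$-closed and hence harmless for computing $\dd_{\kappa_i}^\lambda$, and the later blocks (adding Cohen subsets of smaller regulars $\kappa_j$, $j>i$) are too small / too cofinally-constrained to add unbounded functions in $\kappa_i^\lambda$.

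The key steps, in order, would be: (1) Verify the chain condition and distributivity bookkeeping: each $\CC_{\kappa_i}^{\mu_i}$ is $\kappa_i^+$-cc (by $\Delta$-system, using GCH and $\mathrm{cf}(\mu_i)>\lambda\ge\kappa_i$), and is $<\kappa_i$-closed; combined with the product structure this tells us exactly which subsequences of the generic are mutually generic and which parts of the forcing are $<\kappa_i$-distributive in which intermediate model. (2) Compute cardinal arithmetic in the extension: $2^{<\kappa_i}$ should come out to be the right cardinal — one wants $|2^{<\kappa_i}| = \lambda$ preserved or set appropriately; here one uses $\cf(\mu_i)>\lambda$ to keep $2^{\kappa_i}$ from collapsing things and to keep cofinalities straight. (3) Prove $\dd_{\kappa_i}^\lambda \le \mu_i$: the $\mu_i$ Cohen subsets of $\kappa_i$, suitably decoded, give $\mu_i$ many functions in $\kappa_i^\lambda$; a genericity/density argument (entirely analogous to the one in the proof of Theorem~\ref{2ndcon}, where Cohen functions code elements avoiding a given meager set) shows that any single function in $\kappa_i^\lambda$ in the extension, being captured by $<\kappa_i^+$-many coordinates by the chain condition, is dominated by one of the decoded Cohen functions added later. (4) Prove $\dd_{\kappa_i}^\lambda \ge \mu_i$: given fewer than $\mu_i$ functions in $\kappa_i^\lambda$, the $\kappa_i^+$-cc implies they all live in an intermediate extension obtained by using $<\mu_i$ coordinates of the $i$-th block (plus all of the other blocks); then a mutually-generic Cohen subset of $\kappa_i$ from the remaining coordinates of the $i$-th block decodes to a function in $\kappa_i^\lambda$ not dominated by any of them — this is a standard "Cohen reals are unbounded" style argument adapted to $\kappa_i^\lambda$, using that the remaining block is still Cohen forcing over the intermediate model. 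One must also check that the other blocks do not raise $\dd_{\kappa_i}^\lambda$ beyond $\mu_i$: blocks for $j<i$ are $<\kappa_i$-closed so add no new functions in $\kappa_i^\lambda$ at all (relative to the appropriate intermediate model), and blocks for $j>i$ add Cohen subsets of $\kappa_j<\kappa_i$ which, being $\kappa_j^+$-cc and of size $\mu_j<\mu_i$, cannot by themselves produce a dominating family of size $<\mu_i$ in $\kappa_i^\lambda$ — the point is they add no dominating family smaller than what the $i$-th block already forces, so the lower bound is not destroyed.

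I expect the main obstacle to be step (4) together with the "no interference" verification — specifically, pinning down the exact order in which the product blocks should be iterated or multiplied, and tracking which intermediate models are the right ones for each application of the chain condition and closure. In a plain product the order does not matter, but the arithmetic ($2^{<\kappa_i}$ must come out equal to $\lambda$ in the final model, with the inequalities $\mu_n > \lambda$ and $\cf(\mu_i)>\lambda$ doing all the work) is delicate, and one has to be careful that adding Cohen subsets of $\kappa_j$ for $j<i$ (large regulars) does not inadvertently blow up $2^{<\kappa_i}$ above $\lambda$ — this is where $\mathrm{cf}(\mu_j)>\lambda$ is needed, so that $2^{\kappa_j} = \mu_j$ in the extension stays above $\lambda$ but $2^{<\kappa_i}$, which only sees the Cohen subsets of cardinals $<\kappa_i$, remains $\le\lambda$. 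A secondary technical point is that one should double-check, using Proposition~\ref{dom-prop}, that it is harmless to work with everywhere domination throughout, since $\mathrm{cf}(\lambda)\ge\kappa_i$ need not hold for singular $\lambda$ — but in fact for the coding arguments everywhere domination is the convenient notion and Proposition~\ref{dom-prop} shows it agrees with $\dd_{\kappa_i}^\lambda$ whenever we need it.
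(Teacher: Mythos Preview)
Your setup is the same as the paper's --- a product of the Cohen forcings $\CC_{\kappa_i}^{\mu_i}$ --- and your lower-bound argument (step~(4)) is fine in spirit. But step~(3), the upper bound $\dd_{\kappa_i}^\lambda \leq \mu_i$, has a genuine gap: Cohen functions do \emph{not} dominate. Adding a $\kappa_i$-Cohen function over a model $W$ gives a function that is unbounded over $W\cap\kappa_i^{\kappa_i}$, but it dominates nothing; no ``genericity/density argument'' will turn Cohen generics into a dominating family, and the analogy with Theorem~\ref{2ndcon} (which is about non-meagerness, not domination) does not carry over. So as written, you have no proof that $\dd_{\kappa_i}^\lambda \leq \mu_i$ in the final model.

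The paper's fix is much simpler than anything you attempt. Force in the order $\CC_{\kappa_n}^{\mu_n}$, then $\CC_{\kappa_{n-1}}^{\mu_{n-1}}$, \dots, finally $\CC_{\kappa_0}^{\mu_0}$. After the first $n-i+1$ steps (call this model $V_i$) one has $2^{\kappa_i} = 2^\lambda = \mu_i$ by GCH and $\cf(\mu_i) > \lambda$, so the entire set $(\kappa_i^\lambda)^{V_i}$ has size $\mu_i$ and is trivially a dominating family, giving $\dd_{\kappa_i}^\lambda = \mu_i$ in $V_i$. The remainder forcing adds Cohen subsets of cardinals $< \kappa_i$, hence is $\kappa_{i-1}^+$-cc and therefore $\kappa_i^\lambda$-bounding; this preserves both $\dd_{\kappa_i}^\lambda \leq \mu_i$ (the old dominating family still dominates) and $\dd_{\kappa_i}^\lambda \geq \mu_i$ (a smaller dominating family in the final model would reflect to one in $V_i$). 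That is the whole proof.

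Two smaller points: your indexing is reversed throughout (in the statement $j<i$ means $\kappa_j < \kappa_i$, the \emph{smaller} regular, not the larger), and your worry that ``$2^{<\kappa_i}$ must come out equal to $\lambda$'' is misplaced --- the statement makes no such demand, and indeed $2^{<\kappa_i}$ will be $\mu_0$ for $i>0$ in the final model.
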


\begin{proof}
Start by adding $\mu_n$ many Cohen subsets of $\kappa_n$. By backwards recursion add $\mu_i$ many Cohen subsets of $\kappa_i$.
Finish by adding $\mu_0$ many Cohen subsets of $\kappa_0$. The first forcing forces $2^{\kappa_n} = 2^\lambda = \dd_{\kappa_n}^\lambda = \mu_n$. 
Since the remainder forcing is $\kappa_{n-1}^+$-cc and thus $\kappa_n^{\kappa_n}$-bounding, it preserves the value of $\dd^\lambda_{\kappa_n}$.
Iterating this argument we see that $2^{\kappa_i} = 2^\lambda =  \mu_0$ and $\dd_{\kappa_i}^\lambda = \mu_i$ in the final model.
\end{proof}

In particular $\dd^\lambda_\kappa < 2^\lambda$ is consistent for $\kappa < \lambda$, but in the model provided by the observation $2^{< \kappa} > \dd^\lambda_\kappa
> \lambda$ holds. On the other hand, for an affirmative answer to Question~\ref{cof-ques} we would need a model with $\lambda := 2^{<\kappa} > \kappa$
and $\dd^\lambda_\kappa < 2^\lambda$. Let us formulate this question somewhat more generally:

\begin{ques}
Assume $\kappa$ is regular and $\lambda > \kappa$ with $\lambda \geq 2^{<\kappa}$. Is $\dd^\lambda_\kappa < 2^\lambda$ consistent?
\end{ques}

For $\kappa = \omega$ and $\lambda = \omega_1$ this is a famous old question of Jech and Prikry~\cite{JP79} (see also~\cite[Problem 8.1]{Mi15}):

\begin{ques}[Jech, Prikry]
Is $\dd^{\omega_1}_\omega < 2^{\omega_1}$ consistent?
\end{ques}

The dominating numbers $\dd^\lambda_\kappa$ have since been used in~\cite{BSta}.



\end{document}